\newcommand{\Hq}{\mathbb{H}}
\newcommand{\C}{\mathbb{C}}
\newcommand{\R}{\mathbb{R}}
\newcommand{\Z}{\mathbb{Z}}
\newcommand{\N}{\mathbb{N}}
\newcommand{\Q}{\mathbb{Q}}
\newcommand{\CP}{\mathbb{CP}}
\newcommand{\RP}{\mathbb{RP}}
\newcommand{\HP}{\mathbb{HP}}
\newcommand{\FP}{\mathbb{FP}}
\newcommand{\F}{\mathbb{F}}
\newcommand{\T}{\mathscr{T}}
\newcommand{\la}{\langle}
\newcommand{\ra}{\rangle}
\DeclareMathOperator{\Ca}{Ca\mathbb{P}^2}
\DeclareMathOperator{\Vect}{Vect}
\DeclareMathOperator{\Rep}{Rep}
\DeclareMathOperator{\Rr}{\mathfrak{R}}
\DeclareMathOperator{\U}{U}
\DeclareMathOperator{\Or}{O}
\DeclareMathOperator{\So}{SO}
\DeclareMathOperator{\Sp}{Sp}
\DeclareMathOperator{\Gr}{G}
\DeclareMathOperator{\Bo}{BO}
\DeclareMathOperator{\Spin}{Spin}
\newtheorem{thm}{Theorem}[section]
\newtheorem*{thm*}{Theorem}
\newtheorem{lem}[thm]{Lemma}
\newtheorem{prop}[thm]{Proposition}
\newtheorem{cor}[thm]{Corollary}
\newtheorem*{mainthm}{Main Theorem}
\theoremstyle{definition}
\newenvironment{question}{\noindent\textbf{Question:}}
\begin{document}

\author{David Gonz\'alez-\'Alvaro}
\address{ Department of Mathematics, Universidad Aut\'onoma de Madrid, and ICMAT CSIC-UAM-UCM-UC3M}
\curraddr{}
\email{dav.gonzalez@uam.es}


\thanks{The author was supported by the following research grants:  MTM2011-22612 from the Ministerio de Ciencia e Innovaci\'on (MCINN), FPI grant BES-2012-053704, MINECO: ICMAT Severo Ochoa project SEV-2011-0087, and MTM2014-57769-C3-3-P from the Ministerio de Econom\'ia y Competitividad of Spain.}

\thanks{}

\title[Nonnegative curvature on stable bundles]{Nonnegative curvature on stable bundles over compact rank one symmetric spaces}

\subjclass[2000]{53C20} 

\begin{abstract} 
In this note we show that every (real or complex) vector bundle over a compact rank one symmetric space carries, after taking the
Whitney sum with a trivial bundle of sufficiently large rank, 
a metric with nonnegative sectional curvature. We also examine the case of complex vector bundles over other manifolds, and give upper bounds for the rank of the trivial bundle that is necessary to add when the base is a sphere.



\end{abstract}

\maketitle


\section{Introduction and statement of results}

In 1972, Cheeger and Gromoll proved the fundamental structure theorem for open nonnegatively curved Riemannian manifolds:

\begin{thm*}[The Soul Theorem \cite{CG}]
Let $M$ be an open Riemannian manifold with nonnegative sectional curvature. There exists a compact, totally geodesic and totally convex submanifold $S$ without boundary such that $M$ is diffeomorphic to the normal bundle of $S$.
\end{thm*}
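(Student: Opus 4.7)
The plan is to construct the soul $S$ as the terminal member of an iterated ``core'' construction applied to a compact totally convex subset of $M$, and then to produce a diffeomorphism $M \cong \nu(S)$ via a distance-nonincreasing retraction onto $S$. The first step is to exhaust $M$ by compact totally convex sets. Fix a basepoint $p \in M$; since $M$ is noncompact and complete, there exist rays $\gamma\colon[0,\infty)\to M$ emanating from $p$. For each such ray the Busemann function
\[
b_\gamma(x) \;=\; \lim_{t\to\infty}\bigl(t - d(x,\gamma(t))\bigr)
\]
is well defined, and an application of Toponogov's triangle comparison (using nonnegative sectional curvature) yields that $b_\gamma$ is concave along geodesics. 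Consequently the super-level sets $\{x : b_\gamma(x) \ge c\}$ are totally convex, and intersecting over all rays from $p$ while letting $c$ vary produces an exhausting, nested family of compact totally convex subsets of $M$.

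Given any compact totally convex set $C \subset M$ with nonempty boundary, I would ``erode'' it by setting
\[
C^a \;=\; \{x \in C : d(x,\partial C) \ge a\},
\]
and taking $a$ to be the largest value for which $C^a$ is nonempty; the resulting set is again compact and totally convex, and has strictly smaller dimension. Iterating this erosion terminates in a compact totally convex subset $S$ with empty boundary, which must then be a closed embedded $C^{1,1}$ submanifold; total convexity combined with the absence of boundary forces $S$ to be totally geodesic, hence smooth. This $S$ is the soul.

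The final step is to show that $M$ is diffeomorphic to the normal bundle $\nu(S)$. The normal exponential map $\exp^\nu\colon\nu(S)\to M$ is surjective, since every point of $M$ is joined to $S$ by a minimizing segment, and it is a local diffeomorphism near the zero section, but global injectivity is not automatic. My approach would be to construct a continuous distance-nonincreasing retraction $\pi\colon M\to S$ (a Sharafutdinov-type retraction) as an appropriate limit of gradient-like flows associated to the convex exhaustion from the first step, and then to argue that $\pi$ can be chosen to serve as the bundle projection in a structure equivalent to $\nu(S)$. This last step is where I expect the main difficulty to lie: the distance function to $S$ is only Lipschitz, so the radial flow is not literally a smooth gradient flow, and upgrading a topological retraction to a smooth diffeomorphism demands careful analysis of the equidistant hypersurfaces and of the first and second variation of arc length along geodesics leaving $S$.
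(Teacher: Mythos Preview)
The paper does not prove this statement at all: the Soul Theorem is quoted in the introduction purely as background, with a citation to Cheeger--Gromoll \cite{CG}, and no argument is given or even sketched. So there is no ``paper's own proof'' to compare your proposal against.

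For what it is worth, your outline is the standard Cheeger--Gromoll strategy (Busemann functions and Toponogov to build a compact totally convex exhaustion, iterated erosion to the soul, then a retraction to identify $M$ with $\nu(S)$), and you have correctly flagged the genuinely delicate point, namely upgrading the Sharafutdinov-type retraction to a smooth diffeomorphism. But none of that is needed here: within this paper the Soul Theorem is an input, not a result to be established.
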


Such a submanifold is called a \textit{soul} of $M$. It is natural to ask to what extent a converse to the Soul Theorem holds.

\medskip

\begin{question}\label{existence}
Let $E$ be a vector bundle over a compact manifold $S$ with nonnegative sectional curvature. Does $E$ admit a complete metric of nonnegative curvature with soul $S$?
\end{question}

\medskip

The answer is clearly affirmative when $S$ is a homogeneous manifold $G/H$ of a compact Lie group $G$ and $E$ is a homogeneous vector bundle; that is, a bundle of the form $(G\times\F^n)/H$, where $\F$ stands for $\R$ or $\C$ and $H$ acts on $\F^n$ by means of a linear representation.

The first obstructions to the above question were found by \"Ozaydin and Walschap in \cite{OW}: a plane bundle over a torus admits a nonnegatively curved metric if and only if its rational Euler class vanishes.  Later, Guijarro in his thesis \cite{Gu} and Belegradek and Kapovitch in the series of papers \cite{BK1} and \cite{BK2}, extended these results to a larger class of bundles over some other nonsimply connected souls. 

However, in all these examples the obstructions are always due to the existence of a nontrivial fundamental group. So it is still important to see whether nonnegatively curved metrics exist when the base of the bundle is simply connected. 

Even the case of the sphere is still open, except for dimensions $n\leq 5$; see the article \cite{GZ1} by Grove and Ziller. So it is rather welcome to see that for any sphere there is a positive answer after passing to the stable realm.

\begin{thm*}[Rigas \cite{Ri}]
Let $E$ be a real vector bundle over a sphere $\mathbb{S}^n$. Denote by $k$ the trivial real vector bundle of rank $k$.  Then, for some $k$ the Whitney sum $E\oplus k$ admits a metric with nonnegative sectional curvature.
\end{thm*}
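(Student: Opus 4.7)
The plan is to realize $E$, after adding a trivial bundle of sufficiently large rank, as a homogeneous vector bundle over some presentation $\mathbb{S}^n = G/H$ as a compact homogeneous space. As already noted in the discussion following Question~\ref{existence}, such homogeneous bundles automatically admit metrics of nonnegative sectional curvature with $\mathbb{S}^n$ as a soul.

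First I would pass to the stable category. By the clutching construction, real rank-$k$ vector bundles over $\mathbb{S}^n$ are classified by $\pi_{n-1}(\Or(k))$, and these groups stabilize for $k \geq n$ to $\pi_{n-1}(\Or) \cong \widetilde{KO}(\mathbb{S}^n)$. Whitney sum with a trivial bundle preserves this stable class while raising the rank, and stably equivalent bundles of sufficiently high rank are actually isomorphic. Consequently, to prove the theorem it is enough to show that every class in $\widetilde{KO}(\mathbb{S}^n)$ is represented by some vector bundle that admits a nonnegatively curved metric: if $[E'] = [E]$ for such an $E'$, then $E \oplus \underline{a} \cong E' \oplus \underline{b}$ for some trivial bundles $\underline{a}, \underline{b}$, and the right-hand side inherits a product nonnegatively curved metric.

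For any presentation $\mathbb{S}^n = G/H$ with $G$ a compact Lie group and any orthogonal representation $\rho\colon H \to \Or(k)$, the associated bundle $E_\rho = G \times_H \R^k$ carries a natural nonnegatively curved metric: equip $G$ with a bi-invariant metric and $\R^k$ with the flat metric, and pass to the quotient by the isometric, free diagonal $H$-action. O'Neill's formula then implies that this submersion metric has nonnegative sectional curvature, and the zero section $G/H = \mathbb{S}^n$ is totally geodesic, providing a soul. Thus it suffices to check that every element of $\widetilde{KO}(\mathbb{S}^n)$ arises as $[E_\rho]$ for some admissible triple $(G, H, \rho)$.

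The main obstacle is precisely this realization step. The groups $\pi_{n-1}(\Or)$ are completely described by Bott periodicity and admit small explicit sets of generators, and the plan is to identify each generator with the clutching function of a homogeneous bundle coming from a representation $\rho$ of $H$. In addition to the standard presentation $\mathbb{S}^n = \So(n+1)/\So(n)$ (or its $\Spin$ cover), one may need to exploit alternative transitive actions of compact Lie groups on spheres in special dimensions, such as $\mathbb{S}^7 = \Spin(7)/G_2$ and $\mathbb{S}^{15} = \Spin(9)/\Spin(7)$, in order to capture generators arising from octonionic Hopf-type constructions, possibly combined with the $J$-homomorphism to translate representation-theoretic data into stable bundle data. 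Once this surjectivity is established, the theorem follows by combining it with the previous paragraph.
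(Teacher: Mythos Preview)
Your reduction is correct and matches the paper's architecture exactly: pass to stable classes via $\widetilde{KO}(\mathbb{S}^n)$, then show that every stable class contains a homogeneous bundle $E_\rho$ for some presentation $\mathbb{S}^n=G/H$; nonnegative curvature then follows from the O'Neill argument you describe. So far so good.

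The gap is that you stop precisely where the work begins. You correctly flag the surjectivity of $\Rep_{\R}(H)\to S_{\R}(\mathbb{S}^n)$ as ``the main obstacle'' but do not establish it; your final paragraph is a list of tools one \emph{might} try rather than an argument. In the paper this step is carried out concretely (Proposition~\ref{propo stable sphere}): one fixes the single presentation $\mathbb{S}^n=\Spin(n+1)/\Spin(n)$, reads off $\widetilde{K}_{\R}(\mathbb{S}^n)$ from Bott periodicity, and then identifies an explicit generator in each nontrivial case using the spin representations $\Delta$, $\Delta_{\pm}$ of $\Spin(n)$ together with the known behaviour of the complexification and realification maps between $\widetilde{K}_{\R}$ and $\widetilde{K}_{\C}$. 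The argument is a short case analysis modulo $8$; the one case requiring care is $n\equiv 1\ (\mathrm{mod}\ 8)$, where one must check that $\{E_\Delta\}_{\R}$ is genuinely nontrivial.

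Two of your proposed tools are red herrings. The exotic presentations $\mathbb{S}^7=\Spin(7)/G_2$ and $\mathbb{S}^{15}=\Spin(9)/\Spin(7)$ are not needed: the standard $\Spin(n+1)/\Spin(n)$ suffices in every dimension, because the spin representations already furnish generators of $\widetilde{KO}(\mathbb{S}^n)$. Likewise the $J$-homomorphism is not used; the comparison with $\widetilde{K}_{\C}$ (where surjectivity of $\alpha_{\C}$ is Pittie's theorem) does all the translation work. Your plan would eventually succeed, but only after you discard these detours and compute with the spin representations directly.
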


The starting point in Rigas' proof is the isomorphism between stable classes of real vector bundles over $\mathbb{S}^n$ and the homotopy group $\pi_n (\Bo)$, where $\Bo$ is the classifying space of the infinite orthogonal group $\Or$. He shows that the generators of $\pi_n (\Bo)$ can be realized by isometric embeddings of standard Euclidean spheres as totally geodesic submanifolds of Grassmannian manifolds. Using this fact he is able to prove the existence of homogeneous bundles in every stable class. Recall that two vector bundles $E,F$ over a compact space are stably equivalent if there exist trivial bundles $n,m$ such that $E\oplus n$ is isomorphic to $F\oplus m$.

Our goal is to extend Rigas' Theorem to some other nonnegatively curved compact spaces.  Natural candidates are the remaining compact rank one symmetric spaces, namely the projective spaces  $\RP^{n}$, $\CP^{n}$, $\HP^{n}$ and the Cayley plane $\Ca$. In order to do that, the main tool will be the isomorphism between stable classes and reduced $K$-theory.


$K$-theory of complex vector bundles over a topological space $X$ was introduced around 1960 by Atiyah and Hirzebruch (see \cite{At}); in \cite{AH} they studied more closely the particular case when $X$ is a compact homogeneous space. $K$-theory concerning real vector bundles has been also studied (see for example \cite{Ho}, \cite{San}), although it is not so well understood as in the complex case. The following is the main result in our paper.

\begin{mainthm}\label{principalthm cross}
Let $E$ be an arbitrary real (resp. complex) vector bundle over a compact rank one symmetric space. Denote by $k$ the trivial real (resp. complex) vector bundle of rank $k$. Then, for some $k$ the Whitney sum $E\oplus k$ admits a metric with nonnegative sectional curvature.
\end{mainthm}

In the case of the sphere our methods yield an alternative proof of Rigas' Theorem. Moreover, our approach allows us to give an upper bound $k_0$ for the smallest integer $k$ satisfying the Main Theorem. In order to state our result we need to recall that, as a consequence of the Bott Integrability Theorem (see \cite{Hu}, Chapter 20), if $E$ is a real vector bundle over a sphere $\mathbb{S}^n$ of dimension $n\equiv  0\ (\textrm{mod}\ 4)$, then its $(n/4)$-th Pontryagin class $p_{n/4}(E)$ is of the form
$$p_{n/4}(E)= ((n/2) - 1)!(\pm l_E) a$$
for some natural number $l_E$, where $a$ is a generator of $H^{n}(\mathbb{S}^{n},\Z)$.


\begin{thm}\label{thm sphere rank bounds}
Let $E$ be an arbitrary real vector bundle over $\mathbb{S}^n$. Let $k_0$ be the least integer such that the Whitney sum $E\oplus k_0$ admits a metric with nonnegative sectional curvature. The following inequalities hold:
\begin{itemize}

\item $k_0\leq n+1$, if $n\equiv 3,5,6,7\ (\textrm{mod}\ 8)$.

\item $k_0 \leq 2^n$, if $n\equiv  1,2\ (\textrm{mod}\ 8)$.

\item $k_0\leq \max\{n+1, 2^{n-1}l_E\}$, if $n\equiv  0, 4\ (\textrm{mod}\ 8)$.
\end{itemize}

\end{thm}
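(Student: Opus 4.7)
The plan is to combine the Main Theorem's construction of homogeneous nonnegatively curved representatives with an explicit rank count governed by real Bott periodicity. The guiding observation is that if $F$ is a homogeneous (hence nonnegatively curved) bundle in the stable class of $E$, and if $E\oplus k\cong F\oplus k'$ for some $k'\geq 0$, then $E\oplus k$ inherits a nonnegatively curved metric, since $F\oplus k'$ carries the Riemannian product of the homogeneous metric on $F$ with a flat metric on the trivial summand.

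First I would quantify when stable equivalence forces genuine isomorphism. The long exact sequence associated with the fibration $\Or(r)\hookrightarrow\Or(r+1)\to\mathbb{S}^{r}$ shows that $\pi_{n-1}(\Or(r))\to\pi_{n-1}(\Or)$ is a bijection for $r\geq n+1$. Thus, given a homogeneous representative $F$ of rank $s$ in the stable class of $E$ (of rank $r$), the choices $k=\max\{n+1,s\}-r$ and $k'=\max\{n+1,s\}-s$ place both sides in the stable range and force $E\oplus k\cong F\oplus k'$. Consequently
\[
k_0\leq \max\{n+1,s\}-r\leq \max\{n+1,s\},
\]
and each of the three bullets reduces to bounding the rank $s$ of a homogeneous representative in the relevant stable class.

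For $n\equiv 3,5,6,7\ (\mathrm{mod}\ 8)$ the group $\widetilde{KO}(\mathbb{S}^n)\cong\pi_n(\Bo)$ vanishes, so $F$ can be chosen to be the zero bundle ($s=0$) and $k_0\leq n+1$. For $n\equiv 1,2\ (\mathrm{mod}\ 8)$ we have $\widetilde{KO}(\mathbb{S}^n)=\Z/2$, and I would realize the unique nontrivial generator as the pullback of the tautological bundle of a suitable real Grassmannian along a totally geodesic isometric embedding of $\mathbb{S}^n$ of the Rigas type used in the Main Theorem; the ambient Grassmannian can be arranged so that the tautological bundle has rank at most $2^n$, giving $s\leq 2^n$ and hence $k_0\leq\max\{n+1,2^n\}=2^n$.

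Finally, for $n\equiv 0,4\ (\mathrm{mod}\ 8)$ we have $\widetilde{KO}(\mathbb{S}^n)=\Z$, and Bott Integrability identifies the stable class of $E$ with $\pm l_E$ times a generator. I would construct a homogeneous generator $F_0$ of rank at most $2^{n-1}$, again as the pullback of a tautological Grassmannian bundle under a totally geodesic embedding, and then realize the stable class of $E$ as the Whitney sum of $l_E$ copies of $F_0$ (or of a complementary bundle of the same rank, to produce the opposite sign). This yields $s\leq 2^{n-1}l_E$ and the stated inequality. The main obstacle, and the place where I expect to spend the most effort, is precisely this last step: exhibiting totally geodesic embeddings $\mathbb{S}^n\hookrightarrow G$ whose pulled-back tautological bundles generate $\pi_n(\Bo)$ with the claimed ranks, and verifying through characteristic class computations (Stiefel--Whitney in the $\Z/2$ case, Pontryagin via the Bott Integrability normalization in the $\Z$ case) that the outputs are genuine generators rather than proper multiples.
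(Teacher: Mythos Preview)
Your overall architecture---find a homogeneous representative $F$ in the stable class of $E$, then invoke the stable-range theorem so that $E\oplus k\cong F\oplus k'$ once $\mathrm{rank}(E\oplus k)\geq\max\{n+1,\mathrm{rank}\,F\}$---is exactly what the paper does. The divergence is in how the representatives are produced, and your proposal takes a harder road than necessary.

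You plan to realize the generators of $\widetilde{KO}(\mathbb{S}^n)$ by pulling back tautological bundles along Rigas-style totally geodesic embeddings into Grassmannians, and you correctly identify the resulting characteristic-class verification as the main obstacle. The paper avoids this entirely. It writes $\mathbb{S}^n=\Spin(n+1)/\Spin(n)$ and reads off the generators directly from the representation theory of $\Spin(n)$: for $n\equiv 1\pmod 8$ the generator is $E_\Delta$ (real dimension $2^{(n-1)/2}$, but the bound uses the cruder $2^n$ since that already suffices); for $n\equiv 2$ it is $E_{r(\Delta_+)}$; for $n\equiv 0$ it is $E_{\Delta_+}$; for $n\equiv 4$ it is $E_{r(\Delta_+)}$. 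That these are genuine generators (not proper multiples) is forced by the known structure of the complexification and realification maps between $\widetilde{KO}(\mathbb{S}^n)$ and $\widetilde{KU}(\mathbb{S}^n)$, which the paper quotes from Mimura--Toda; no Stiefel--Whitney or ad hoc Pontryagin computations are needed. The only characteristic-class input is Bott Integrability, and that is used in the opposite direction from what you propose: not to certify that the constructed bundle is a generator, but to translate the abstract integer $l$ indexing the stable class into the Pontryagin number $l_E$ appearing in the statement.

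One small correction: you refer to Rigas-type embeddings as ``used in the Main Theorem,'' but the paper's proof of the Main Theorem for spheres is purely representation-theoretic and does not go through Grassmannian embeddings at all; that is precisely the point of the paper's alternative approach.
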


%
%
%
%
%
%

The results by Atiyah and Hirzebruch on $K$-theory of complex vector bundles over homogeneous spaces were extended by several authors (see for example \cite{Ad}, \cite{Ha}, \cite{Mi1}, \cite{Mi2}, \cite{Pi}). The following theorem will be a consequence of some of these results. 

\begin{thm}\label{thm complex bundles over many spaces}
Let $E$ be an arbitrary complex vector bundle over a manifold in one of the two following classes $X_i$:
\begin{itemize}
\item $X_1$ is the class of compact nonnegatively curved manifolds $M$ whose even dimensional Betti numbers $b_{2i}(M)$ vanish for $i\geq 1$, and such that $H^*(M,\Z)$ is torsion-free.
\item $X_2$ is the class of compact homogeneous spaces $G/H$ such that $G$ is a compact, connected Lie group with $\pi_1(G)$ torsion-free and $H$ a closed, connected subgroup of maximal rank.
\end{itemize}
Denote by $k$ the trivial complex vector bundle of rank $k$. Then, for some $k$ the Whitney sum $E\oplus k$ admits a metric with nonnegative sectional curvature.
\end{thm}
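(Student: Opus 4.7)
The plan is to show that in either class every complex bundle $E$ is stably isomorphic to a bundle carrying nonnegative curvature --- in class $X_1$ to a trivial bundle (hence admitting a flat metric), in class $X_2$ to a homogeneous bundle $G\times_H V$ (which carries a standard submersion metric of nonnegative curvature).

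For class $X_1$ I would argue that the reduced complex K-theory $\tilde{K}^0(M)$ vanishes, so that every bundle $E$ satisfies $E\oplus k\cong\underline{\C}^{\operatorname{rk}E+k}$ for some $k$, and the right-hand side is flat. The vanishing is read off from the Atiyah--Hirzebruch spectral sequence $E_2^{p,q}=H^p(M,K^q(\mathrm{pt}))\Rightarrow K^{p+q}(M)$. Since $K^q(\mathrm{pt})$ is $\Z$ in even degrees and $0$ otherwise, the only contributions to $\tilde{K}^0(M)$ come from $H^{2i}(M,\Z)$ with $i\geq 1$. By the torsion-freeness assumption and universal coefficients these groups are free abelian of rank $b_{2i}(M)=0$, so they are zero, and all differentials and extensions are trivially satisfied.

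For class $X_2$ the main input is Pittie's theorem (reference [Pi] in the excerpt): under the hypotheses on $(G,H)$, the natural map $\alpha\colon R(H)\to K^{0}(G/H)$, $[V]\mapsto [G\times_H V]$, is surjective. Given $E$, I would pick honest $H$-representations $V_0,W_0$ with $[E]=\alpha([V_0])-\alpha([W_0])$ in $K^0(G/H)$. The next step is to turn the virtual representation into an honest one. By Peter--Weyl applied to $G$ viewed as an $H$-space, every irreducible $H$-representation appears in the restriction of some finite-dimensional $G$-representation, so I can find a $G$-representation $W$ with $W\!\mid_H\,\cong W_0\oplus W_0'$ for some $W_0'$. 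Because $W$ extends to $G$, the associated bundle $G\times_H(W\!\mid_H)$ is canonically isomorphic to the trivial bundle of rank $\dim W$, which in K-theory gives $\alpha([W\!\mid_H])=\dim W$. Adding this to the expression for $[E]$ yields
\[
[E]+\dim W \;=\; \alpha([V_0])+\alpha([W_0'])\;=\;[G\times_H(V_0\oplus W_0')].
\]
Hence $E\oplus\underline{\C}^{\dim W}$ is stably isomorphic to the homogeneous bundle $G\times_H(V_0\oplus W_0')$; absorbing a further trivial summand to pass from stable isomorphism to honest isomorphism, one has $E\oplus\underline{\C}^{k}\cong G\times_H\bigl(V_0\oplus W_0'\oplus\C^{k-\dim W}\bigr)$ for suitable $k$. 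After averaging to make the fiber $H$-Hermitian, the right-hand side carries the submersion metric from a bi-invariant metric on $G$ times the flat metric on the fiber, which is nonnegatively curved by O'Neill's formula.

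The main obstacle is locating and invoking Pittie's theorem in the precise form needed: the surjectivity of $\alpha$ is what forces every class of $K^0(G/H)$ to have a homogeneous representative after stabilization. Once that is in hand, the step of upgrading a virtual representation $[V_0]-[W_0]$ to an honest one via a restricted $G$-representation is a formal manipulation, and the geometric construction of the nonnegatively curved metric on a homogeneous vector bundle is classical. Case $X_1$ is essentially immediate from the spectral sequence once the hypotheses are parsed through universal coefficients.
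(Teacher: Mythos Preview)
Your proposal is correct and follows essentially the same route as the paper. For $X_1$ the paper quotes the Atiyah--Hirzebruch theorem that $K_{\C}(M)$ is free abelian of rank $\sum b_{2i}(M)$ when $H^*(M,\Z)$ is torsion-free, which is exactly the spectral-sequence computation you sketch; for $X_2$ the paper invokes Pittie's surjectivity theorem and then appeals to two preparatory lemmas (its Lemma~2.3 on complementing a homogeneous bundle to a trivial one, and Lemma~2.6 on nonnegative curvature of homogeneous bundles), whose content you have reproduced inline via the Peter--Weyl/Frobenius argument and the O'Neill submersion construction.
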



Odd-dimensional spheres belong to class $X_1$. The class $X_2$ includes such manifolds as even-dimensional spheres, complex and quaternionic Grassmannian manifolds, the Wallach flag manifolds $W^6$, $W^{12}$ and $W^{24}$ or the Cayley plane. Note that manifolds in the class $X_2$ inherit a nonnegatively curved metric from a biinvariant metric on $G$.

 
The paper is organized as follows. Section \ref{section stable classes repres} recalls basic definitions and facts about $K$-theory and stable classes of vector bundles, and relates them in the homogeneous setting. Section \ref{section proof thm complex bundles} contains the proof of Theorem \ref{thm complex bundles over many spaces}. Section \ref{section sphere} contains the proofs of the Main Theorem for the spheres and of Theorem \ref{thm sphere rank bounds}. The proofs of the Main Theorem for  projective spaces and the Cayley plane  are given in sections \ref{section grassmannian} and \ref{section cayley plane} respectively.

\medskip

\noindent\textbf{Acknowledgements:} This paper is part of the author's PhD. Thesis at the ICMAT and the Universidad Aut\'onoma de Madrid; he would like to thank his thesis advisor Luis Guijarro for letting him know about the problem. The author would also like to thank Fernando Galaz-Garc\'ia for useful comments and Gabino Gonz\'alez-Diez for helpful conversations.


\section{Stable classes and homogeneous bundles}\label{section stable classes repres}

Throughout this section $\F$ will denote either one of the fields $\R$ or $\C$. 

\subsection{Stable classes of vector bundles and $K_{\F}$-theory}\label{subsection stable classes}

We will denote by $\Vect_{\F}(M)$ the set of isomorphism classes of $\F$-vector bundles over a manifold $M$. The Whitney sum $\oplus$ and the tensor product of bundles $\otimes_{\F}$ endow $\Vect_{\F}(M)$ with a semiring strucure. Let 
 $$c: \Vect_{\R}(M)\rightarrow \Vect_{\C}(M)\quad \text{and} \quad r: \Vect_{\C}(M)\rightarrow \Vect_{\R}(M)$$ 
 be the complexification and the real restriction maps of vector bundles respectively. We will write $m_{\F}$ or just $m$ (when there is no danger of confusion) for the trivial $\F$-vector bundle of rank $m$; and $mE$ for the Whitney sum of $E$ with itself $m$ times.
 
If the manifold $M$ is compact we have the following well-known result (see e.g. Lemma 9.3.5 in \cite{AGP}).
 
\begin{lem}\label{lemma exist orthog bundle}
Let $E\in \Vect_{\F}(M)$ with $M$ compact. Then there exists $F\in\Vect_{\F}(M)$ such that $E\oplus F$ is isomorphic to a trivial bundle.
\end{lem}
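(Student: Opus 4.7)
The plan is to realize $E$ as a subbundle of a trivial bundle of sufficiently large rank, and then let $F$ be its orthogonal complement with respect to a suitable fiber metric. Since $M$ is compact, $E\op F$ will then be isomorphic to the ambient trivial bundle, which is exactly what we want.

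First I would exploit compactness of $M$ together with local triviality of $E$: cover $M$ by finitely many open sets $U_1,\dots,U_N$ such that $E|_{U_i}$ admits a trivialization $\varphi_i\colon E|_{U_i}\to U_i\times \F^r$, where $r$ is the rank of $E$. Letting $\pi_i\colon E|_{U_i}\to\F^r$ denote the composition of $\varphi_i$ with projection to the second factor, and choosing a partition of unity $\{\rho_i\}$ subordinate to $\{U_i\}$, I would define a global bundle morphism
\[
\Phi\colon E\longrightarrow M\times \F^{Nr},\qquad \Phi(e)=\bigl(\pi(e);\,\rho_1(\pi(e))\pi_1(e),\dots,\rho_N(\pi(e))\pi_N(e)\bigr),
\]
where $\pi\colon E\to M$ is the bundle projection, with the convention that $\rho_i(p)\pi_i(e)=0$ whenever $p\notin U_i$. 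For each $p\in M$ at least one $\rho_i(p)$ is strictly positive and on that chart $\pi_i$ is a fiberwise linear isomorphism, so $\Phi$ is injective on every fiber and hence identifies $E$ with a subbundle of the trivial bundle $M\times \F^{Nr}$.

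Next I would equip $M\times \F^{Nr}$ with a fiber metric (Euclidean if $\F=\R$, Hermitian if $\F=\C$). The standard constant metric on $\F^{Nr}$ does the job directly, but more generally such metrics always exist on any vector bundle over a compact (or paracompact) manifold by patching local metrics with a partition of unity. Defining $F\subset M\times\F^{Nr}$ as the fiberwise orthogonal complement of $\Phi(E)$, standard arguments show that $F$ is a smooth $\F$-vector subbundle and that the orthogonal decomposition gives an isomorphism $E\op F\cong M\times \F^{Nr}$, which is the desired conclusion.

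The only step that requires some care is verifying that the image $\Phi(E)$ is genuinely a subbundle (i.e., locally trivial) rather than merely a fiberwise subspace of varying dimension; this follows from the fact that $\Phi$ has locally constant fiber rank equal to $r$, which in turn is a consequence of the fiberwise injectivity established above. Everything else is routine patching with a partition of unity, so I do not expect a serious obstacle — the compactness of $M$ is used only to guarantee that the cover $\{U_i\}$ can be chosen finite, ensuring that the trivial bundle into which we embed $E$ has finite rank.
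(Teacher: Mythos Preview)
Your argument is correct and is precisely the standard proof of this well-known fact. The paper does not actually supply its own proof of this lemma; it simply records the statement and refers the reader to \cite{AGP}, Lemma~9.3.5, where the argument you have written (embed $E$ into a trivial bundle via a finite trivializing cover and a partition of unity, then take the fiberwise orthogonal complement) is carried out.
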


From now on we assume that $M$ is compact. We say that $E,F\in\Vect_{\F}(M)$ are \emph{stably equivalent} if there exist trivial bundles $m_1,m_2$ such that $E\oplus m_1$ is isomorphic to $F\oplus m_2$. We will denote by $S_{\F}(M)$ the set of stable classes of bundles over $M$ and by $\{ E\}_{\F}$ the stable class of $E$. The Whitney sum gives $S_{\F}(M)$ the structure of an abelian semigroup. Furhtermore, by Lemma \ref{lemma exist orthog bundle}, every element $\{ E\}_{\F}$ has an inverse, so $S_{\F}(M)$ is an abelian group. Later on we will use the following theorem (see e.g. \cite{Hu}, Chapter 9).

\begin{thm}\label{thm stability of bundles high rank}
Let $E$ and $F$ be real vector bundles of the same rank $k$ over a compact $n$-dimensional manifold $M$ such that $E\oplus m$ is isomorphic to $F\oplus m$ for some integer $m$. If $k\geq n+1$, then $E$ and $F$ are isomorphic.
\end{thm}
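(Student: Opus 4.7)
The plan is to reduce the general statement to the case $m=1$ by a direct induction, then translate the $m=1$ case into a question about homotopies of sections of a sphere bundle, which is handled by obstruction theory in the stable range.

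First I reduce to $m=1$. Assuming the case $m=1$ has been established, and given $E\oplus m \cong F\oplus m$ with $m\geq 2$, write each side as $(E\oplus(m-1))\oplus 1$ and $(F\oplus(m-1))\oplus 1$. The bundles $E\oplus(m-1)$ and $F\oplus(m-1)$ have rank $k+m-1\geq k\geq n+1$, so the base case gives $E\oplus(m-1)\cong F\oplus(m-1)$; iterating yields $E\cong F$.

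For $m=1$, endow the rank-$(k+1)$ bundle $V:=E\oplus 1 \cong F\oplus 1$ with a fibrewise inner product. Each splitting $V\cong E\oplus 1$ singles out a distinguished trivial line sub-bundle, and a choice of unit vector in it produces a nowhere zero section $s_E$ of the sphere bundle $S(V)$ with $s_E^{\perp}\cong E$; similarly one obtains $s_F$ with $s_F^{\perp}\cong F$. It therefore suffices to find a homotopy $\{s_t\}_{t\in[0,1]}$ of unit sections of $V$ from $s_E$ to $s_F$: the family of orthogonal complements $s_t^{\perp}$ then defines a rank-$k$ subbundle $W\subset V\times I$ over $M\times I$ whose restrictions to $t=0$ and $t=1$ are $E$ and $F$, and homotopy invariance of vector bundles over the compact base gives $E\cong F$.

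The existence of such a homotopy is equivalent to extending a section of the sphere bundle pulled back to $M\times I$ from $M\times\{0,1\}$ to all of $M\times I$. The fibre $S^{k}$ is $(k-1)$-connected, so the primary obstructions to extension lie in
\[
H^{i+1}\bigl(M\times I,\, M\times\partial I;\, \pi_i(S^k)\bigr)\;\cong\; H^{i}\bigl(M;\pi_i(S^k)\bigr),\qquad 0\leq i\leq n.
\]
Since the hypothesis $k\geq n+1$ forces $i<k$ throughout this range, every $\pi_i(S^k)$ vanishes and all obstructions disappear, producing the desired homotopy.

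The main obstacle to watch is the bookkeeping at the start of the $m=1$ step, where one must consistently pass between (i) splittings $V\cong E\oplus 1$, (ii) unit sections of $S(V)$, and (iii) sub-bundles of $V\times I$ over $M\times I$, and then invoke homotopy invariance of bundles over a compact base. Once this dictionary is in place, the $(k-1)$-connectedness of $S^k$ together with $\dim(M\times I)=n+1\leq k$ makes the obstruction-theoretic verdict immediate.
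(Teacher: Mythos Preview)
Your argument is correct. The reduction to $m=1$ is clean, the dictionary between trivial line summands and unit sections is set up properly, and the obstruction count is right: the relative cells of $(M\times I,\,M\times\partial I)$ have dimension at most $n+1$, so the obstructions live in $H^{i+1}(M\times I,M\times\partial I;\pi_i(S^k))\cong H^i(M;\pi_i(S^k))$ for $0\le i\le n$, and these groups vanish because $i\le n<k$. One small remark: obstruction theory needs a CW structure on $M$ (or at least the homotopy type of a finite CW complex), which a compact manifold has; and the local-coefficient issue is irrelevant since the coefficient groups are zero.

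As for comparison with the paper: the paper does not actually prove this statement. It is quoted as a standard fact with a reference to Husem\"oller's \emph{Fibre Bundles}, Chapter~9, and is used only as a black box in the proof of Theorem~A. Your obstruction-theoretic argument is precisely the classical proof one finds in such references, so you have supplied what the paper outsources.
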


We write $K_{\F}(M)$ for the $K$-theory ring of $\F$-vector bundles over $M$. This is the ring completion of the semiring $\Vect_{\F}(M)$. Its elements, called \textit{virtual bundles}, are usually written in the form $[E]-[F]$, where $[E_1]-[F_1]$ equals $[E_2]-[F_2]$ if there exists another bundle $E_3$ such that $E_1\oplus F_2\oplus E_3$ and $E_2\oplus F_1\oplus E_3$ are isomorphic. Observe that $K_{\F}(M)$ is a commutative ring with unity.

When $M$ is compact, every element in $K_{\F}(M)$ can be written in the form $[E]-[m]$. To prove this, choose a virtual bundle $[E_1]-[F_1]$. By Lemma \ref{lemma exist orthog bundle} there exists a vector bundle $F_{1}^{\perp}$ such that $F_{1}\oplus F_{1}^{\perp} = m$. Then clearly $[E_1]-[F_1]$ equals $[E_1 \oplus F_{1}^{\perp}]-[m]$.

Consider the ring homomorphism $d:K_{\F}(M)\rightarrow\Z$ given by $d([E]-[F])=\text{rank}(E)-\text{rank}(F)$. The kernel of $d$  is called the \emph{reduced} $K$-theory ring and it is usually denoted by $\tilde{K}_{\F}(M)$. It is an ideal of $K_{\F}(M)$ and thus a ring without unity. There is a natural splitting $K_{\F}(M)=\tilde{K}_{\F}(M)\oplus\Z$. We recall the following well-known theorem that relates the two latter constructions (see e.g. Theorem 9.3.8 in \cite{AGP}).

\begin{thm}\label{theorem isom stable ktheory}
Let $M$ be a compact manifold. Then $\tilde{K}_{\F}(M)\approx S_{\F}(M)$ as abelian groups. An isomorphism is given by:
$$
\begin{array}{rcc}
\Phi_{\F}:\tilde{K}_{\F}(M)  & \rightarrow & S_{\F}(M) \\
\left[ E\right]-\left[ m\right] & \mapsto &  \{ E\}_{\F}
\end{array}
$$
\end{thm}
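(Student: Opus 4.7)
The plan is to verify in turn that $\Phi_\F$ is well defined, a group homomorphism, surjective, and injective. The whole argument is essentially bookkeeping with the Grothendieck construction, with Lemma \ref{lemma exist orthog bundle} serving as the one non-formal ingredient (it lets us absorb any auxiliary bundle appearing in the K-theory relations into a trivial bundle).

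First, I would check well-definedness. Suppose $[E]-[m]=[E']-[m']$ in $\tilde K_\F(M)$; by the construction of $K_\F(M)$ as a ring completion, this means there exists a bundle $E_3$ with $E\oplus m'\oplus E_3\cong E'\oplus m\oplus E_3$. Apply Lemma \ref{lemma exist orthog bundle} to get $E_3^{\perp}$ with $E_3\oplus E_3^{\perp}\cong m_3$ trivial. Adding $E_3^{\perp}$ to both sides gives $E\oplus(m'+m_3)\cong E'\oplus(m+m_3)$, so $\{E\}_\F=\{E'\}_\F$. The homomorphism property is immediate: $\Phi_\F\bigl(([E]-[m])+([E']-[m'])\bigr)=\Phi_\F([E\oplus E']-[m+m'])=\{E\oplus E'\}_\F=\{E\}_\F+\{E'\}_\F$. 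For surjectivity, given an arbitrary stable class $\{E\}_\F$ with $\text{rank}(E)=r$, the element $[E]-[r]$ lies in $\ker d=\tilde K_\F(M)$ and maps to $\{E\}_\F$.

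For injectivity, suppose $\Phi_\F([E]-[m])=\{E\}_\F$ is the trivial stable class. Since $[E]-[m]\in\tilde K_\F(M)$ we have $\text{rank}(E)=m$. Triviality of $\{E\}_\F$ means there exist integers $n_1,n_2$ with $E\oplus n_1\cong n_2$; comparing ranks forces $n_2=m+n_1$. Passing to $K_\F(M)$ we obtain $[E]+[n_1]=[n_2]=[m]+[n_1]$, hence $[E]=[m]$ and $[E]-[m]=0$.

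The only step requiring any real thought is well-definedness, because the naive relation $[E_1]-[F_1]=[E_2]-[F_2]$ in $K_\F(M)$ carries the extra bundle $E_3$, and one has to get rid of it; the hard part, such as it is, is noticing that Lemma \ref{lemma exist orthog bundle} is exactly what accomplishes this. Everything else is direct from the universal property of the Grothendieck group together with the fact that $K_\F(M)=\tilde K_\F(M)\oplus \Z$ splits off the rank homomorphism $d$, so no additional structure is needed.
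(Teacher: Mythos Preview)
Your proof is correct. The paper does not actually prove this theorem: it is stated as a well-known fact with a reference to \cite{AGP} (Theorem 9.3.8), so there is no argument in the paper to compare against. The verification you give is the standard one, and your use of Lemma \ref{lemma exist orthog bundle} to eliminate the auxiliary bundle $E_3$ in the well-definedness step is exactly the point of that lemma in this context.
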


To simplify notation, from now on $E-F$ will denote the virtual bundle $[E]-[F]$. More details about these concepts can be found in \cite{AGP}, \cite{At} and \cite{Hu}. 


\subsection{Homogeneous vector bundles}

Let $G$ be a Lie group. Denote by $\Rep_{\F}(G)$ the set of isomorphism classes of $\F$-representations of $G$. The direct sum and the tensor product of representations endow $\Rep_{\F}(G)$ with a semiring strucure. Let 
 $$c: \Rep_{\R}(G)\rightarrow \Rep_{\C}(G)\quad \text{and} \quad r: \Rep_{\C}(G)\rightarrow \Rep_{\R}(G)$$ 
stand for complexification and real restriction of representations. We will write $m_{\F}$ or simply $m$ for the trivial representation of $G$ on $\F^m$; and $m\rho$ for the sum of $\rho\in\Rep_{\F}(G)$ with itself $m$ times.

If $i:H\rightarrow G$ is the inclusion of a closed subgroup $H$, we denote by 
$$i_{\F}^*: \Rep_{\F}(G)\rightarrow \Rep_{\F}(H)$$
the semiring homomorphism defined by restricting representations of $G$ to $H$.

For each $\rho \in \Rep_{\F}(H)$ we have the diagonal action of $H$ on $G\times \F^m$ from the right given by
$$\begin{array}{ccc}
 (G\times \F^m)\times H & \longrightarrow & G\times \F^m \\
  ((g,v),h) & \longmapsto & (gh,\rho(h)^{-1}v) \end{array}$$
where $m$ is the dimension of the representation $\rho$. The quotient space  $E_{\rho}:=(G\times \F^m)/H$ is the total space of an associated vector bundle $\pi_{\rho}:E_{\rho}\rightarrow G/H$ over the homogeneous manifold $G/H$, where $\pi_{\rho}$ is the obvious projection map. Vector bundles arising in this way are called \emph{homogeneous}. 

We have an analogue result to Lemma \ref{lemma exist orthog bundle} in the homogeneous setting. More precisely (see \cite{Se}), we have the following:

\begin{lem}\label{lemma exist orthog bundle homogeneous}
Let $G$ be a compact Lie group, $H$ a closed subgroup and $E_\rho\in \Vect_{\F}(G/H)$ a homogeneous bundle. Then there exists a representation $\rho^{\perp}\in \Rep_{\F}(H)$ such that $E_{\rho}\oplus E_{\rho^{\perp}}$ is isomorphic to a trivial bundle.
\end{lem}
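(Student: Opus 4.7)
My plan is to split the proof into a geometric step and a representation-theoretic step. First, I would observe that if $\tau\colon G\to\GL(\F^N)$ is \emph{any} representation of the full group $G$, then the homogeneous bundle $E_{i^*\tau}$ associated to its restriction $i^*\tau$ is globally trivial. An explicit trivialization is given by
\[
\Psi\colon E_{i^*\tau}\longrightarrow G/H\times\F^N,\qquad [g,v]\longmapsto (gH,\tau(g)v),
\]
which is well-defined since $(gh,\tau(h)^{-1}v)\mapsto (gH,\tau(gh)\tau(h)^{-1}v)=(gH,\tau(g)v)$, and has smooth inverse $(gH,w)\mapsto [g,\tau(g)^{-1}w]$.

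With that in hand, the lemma reduces to finding $\tau\in\Rep_{\F}(G)$ such that $\rho$ sits in $i^*\tau$ as a direct summand: writing $i^*\tau=\rho\oplus\rho^{\perp}$, one has $E_{i^*\tau}=E_{\rho}\oplus E_{\rho^{\perp}}$, and the left-hand side is trivial by the previous observation. Since $H$ is compact, both real and complex representations of $H$ are completely reducible (average any inner product), so I only need to produce $\tau$ such that $\rho$ embeds as a \emph{subrepresentation} of $i^*\tau$.

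For this last step I plan to invoke the Peter--Weyl theorem together with Frobenius reciprocity. The space of continuous sections of $E_{\rho}\to G/H$ is isomorphic to the induced representation $\mathrm{Ind}_H^G\rho$, an infinite-dimensional unitary $G$-representation. Peter--Weyl decomposes it as a Hilbert direct sum of finite-dimensional irreducibles $V_{\sigma}$ of $G$, and Frobenius reciprocity identifies the multiplicity of $V_{\sigma}$ with $\dim\mathrm{Hom}_H(V_{\sigma}|_H,\rho)$. On the other hand, evaluation at the identity coset yields a nonzero $H$-equivariant surjection $\mathrm{Ind}_H^G\rho\twoheadrightarrow\rho$, so at least one $V_{\sigma}|_H$ must surject onto $\rho$; complete reducibility of $V_{\sigma}|_H$ then gives $\rho\hookrightarrow V_{\sigma}|_H$, and setting $\tau=V_{\sigma}$ (or its underlying real representation, in the real case) finishes the argument.

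The main obstacle will be the appeal to Peter--Weyl and Frobenius reciprocity in the last step; this is where the compactness of $G$ really gets used, and where one may prefer to cite Serre's book rather than reprove things from scratch. Everything else is a formal manipulation of associated bundles and elementary representation theory of the compact group $H$.
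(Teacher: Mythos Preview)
The paper does not actually give its own proof of this lemma; it simply cites Segal \cite{Se}. So there is no argument in the paper to compare against. Your outline is essentially the standard proof one finds behind such a citation, and it is correct. Note also that the paper, immediately after stating the lemma, records precisely your Step~1 observation (that $E_{\rho}$ is trivial if and only if $\rho=i^*\tau$ for some $\tau\in\Rep_{\F}(G)$), so your reduction is in complete agreement with how the paper thinks about the statement.

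One small imprecision in your Step~3: from the $H$-equivariant surjection $\mathrm{Ind}_H^G\rho\twoheadrightarrow\rho$ you conclude that \emph{a single} irreducible $V_\sigma|_H$ surjects onto $\rho$. That follows by Schur's lemma when $\rho$ is irreducible, but for a general $\rho$ it need not hold; what you do get is that some \emph{finite} direct sum $V_{\sigma_1}\oplus\cdots\oplus V_{\sigma_k}$ restricted to $H$ surjects onto $\rho$ (use that the algebraic direct sum is dense and $\rho$ is finite-dimensional). This is harmless for the lemma---take $\tau=V_{\sigma_1}\oplus\cdots\oplus V_{\sigma_k}$ and split off $\rho$ by complete reducibility---but you should either phrase it this way or first reduce to irreducible $\rho$ and then sum. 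Similarly, in the real case your parenthetical ``or its underlying real representation'' should be expanded slightly: work with the real Peter--Weyl decomposition of $L^2(G;\R)$, or complexify $\rho$, run the complex argument, and then observe that the real restriction of the resulting complex $\tau$ still contains $\rho$ as a real summand.
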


Recall that $E_{\rho}$ is isomorphic to a trivial bundle if and only if $\rho$ is the restriction to $H$ of a representation of $G$ (see \cite{GOV}, page 131), i.e., if $\rho=i_{\F}^{*}(\tau)$ for some $\tau\in\Rep_{\F}(G)$.

It is straightforward to check that the following map is a morphism of semirings
$$\begin{array}{ccc}
{\alpha_{\F}}:\Rep_{\F}(H) & \rightarrow & \Vect_{\F}(G/H) \\
 \ \ \ \ \rho  & \mapsto &  E_\rho   \end{array}$$
 
Composing $\alpha_{\F}$ with the map $\{ \}_{\F}:\Vect_{\F}(G/H)\rightarrow S_{\F}(G/H)$ that assigns stable classes to vector bundles we get the induced morphism of semigroups
  $$\begin{array}{ccc}
\{\alpha\}_{\F}: \Rep_{\F}(H) & \rightarrow & S_{\F}(G/H) \\
  \ \ \ \  \rho  & \mapsto &   \left\{ E_\rho \right\}_{\F}  \end{array}$$
  
The ring completion $\Rr_{\F}(G)$ of the semiring $\Rep_{\F}(G)$ is defined in the same manner as the ring completion $K_{\F}(M)$ of the semiring $\Vect_{\F}(M)$. The semiring morphisms $r,c,i_{\F}^{*}$ and ${\alpha_{\F}}$ extend to ring morphisms of the corresponding ring completions, which we denote in the same way. We will write $\rho_1 \rho_2$ and $\rho_1 +\rho_2$ (resp. $E_1 E_2$ and  $E_1 + E_2$) to denote the multiplication and the sum laws in $\Rr_{\F}(G)$ (resp. $K_{\F}(M)$) induced from tensor product and direct sum of representations (resp. vector bundles). The following diagrams commute:

\[
\xymatrix{ &  \Rr_{\R}(G)\ar[d]^{c} \ar[r]^{i_{\R}^*} & \Rr_{\R}(H)\ar[d]^{c} & & \Rr_{\R}(H)\ar[d]^{c} \ar[d]^{c}\ar[r]^{\alpha_{\R}} & K_{\R}(G/H)\ar[d]^{c}\\
 & \Rr_{\C}(G) \ar[r]^{i_{\C}^*} & \Rr_{\C}(H)& & \Rr_{\C}(H)\ar[r]^{\alpha_{\C}}  & K_{\C}(G/H)} 
\]
 
\medskip
 
The maps $\{\alpha\}_{\F}:\Rep_{\F}(H) \rightarrow S_{\F}(G/H)$ and ${\alpha}_{\F}:\Rr_{\F}(H)\rightarrow K_{\F}(G/H)$ are related, as shown in the lemma below. Denote by $\tilde{\Rr}_{\F}(H)$ the kernel of the map $d:\Rr_{\F}(H)\rightarrow \Z$ defined by $d(\rho_1 -\rho_2)=\dim{\rho_1}-\dim{\rho_2}$. It is an ideal of $\Rr_{\F}(H)$.

\begin{lem}\label{lem relates images beta} 
Let $G$ be a compact Lie group and $H$ a closed subgroup. Then, with the notations above,
\begin{enumerate}
\item $\alpha_{\F}( \tilde{\Rr}_{\F}(H))\subset \tilde{K}_{\F}(G/H)$, and if the map $\alpha_{\F}: \Rr_{\F}(H)\rightarrow K_{\F}(G/H)$ is surjective, then the restriction $\alpha_{\F}: \tilde{\Rr}_{\F}(H)\rightarrow \tilde{K}_{\F}(G/H)$ is also surjective.
\item The following equality holds:
$$\Phi_{\F}\circ\alpha_{\F}( \tilde{\Rr}_{\F}(H)) = \{\alpha\}_{\F}(\Rep_{\F}(H)),$$
where $\Phi_{\F}$ is the map from Theorem \ref{theorem isom stable ktheory}. In particular, if $\alpha_{\F}$ is surjective, then $\{\alpha\}_{\F}$ is also surjective.
\end{enumerate} 
\end{lem}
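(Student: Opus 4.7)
My plan is to unpack the definitions and use Lemma \ref{lemma exist orthog bundle homogeneous} as the only nontrivial ingredient.

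For part (1), the key observation is that $\alpha_{\F}$ preserves ranks. Indeed, since $E_\rho$ is a vector bundle of rank equal to $\dim \rho$, the diagram
\[
\xymatrix{\Rr_{\F}(H)\ar[r]^{\alpha_{\F}}\ar[dr]_{d} & K_{\F}(G/H)\ar[d]^{d} \\ & \Z}
\]
commutes. This immediately gives $\alpha_{\F}(\tilde{\Rr}_{\F}(H))\subset \tilde{K}_{\F}(G/H)$. Conversely, if $\alpha_{\F}$ is globally surjective and $v\in\tilde{K}_{\F}(G/H)$, pick any preimage $u\in\Rr_{\F}(H)$ and note that $d(u)=d(\alpha_{\F}(u))=d(v)=0$, so the preimage automatically lies in $\tilde{\Rr}_{\F}(H)$.

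For part (2), I would prove the two inclusions separately, each by an explicit construction. For $\subset$, take $u=\rho_1-\rho_2\in\tilde{\Rr}_{\F}(H)$ with $\dim\rho_1=\dim\rho_2=m$. By Lemma \ref{lemma exist orthog bundle homogeneous}, there exists $\rho_2^{\perp}\in\Rep_{\F}(H)$ with $E_{\rho_2}\oplus E_{\rho_2^{\perp}}$ trivial of some rank $N$. Then, using that $\alpha_{\F}$ is a ring morphism,
\[
\alpha_{\F}(u)=E_{\rho_1}-E_{\rho_2}=E_{\rho_1+\rho_2^{\perp}}-N,
\]
so by definition of $\Phi_{\F}$ we get $\Phi_{\F}(\alpha_{\F}(u))=\{E_{\rho_1+\rho_2^{\perp}}\}_{\F}=\{\alpha\}_{\F}(\rho_1+\rho_2^{\perp})$, which lies in $\{\alpha\}_{\F}(\Rep_{\F}(H))$. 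For $\supset$, given $\rho\in\Rep_{\F}(H)$ of dimension $m$, consider $\rho-m_{\F}\in\tilde{\Rr}_{\F}(H)$; then $\alpha_{\F}(\rho-m_{\F})=E_{\rho}-m$, and $\Phi_{\F}$ sends this to $\{E_\rho\}_{\F}=\{\alpha\}_{\F}(\rho)$.

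Finally, the "in particular" clause follows at once by combining both parts: if $\alpha_{\F}$ is surjective, then by part (1) so is $\alpha_{\F}:\tilde{\Rr}_{\F}(H)\rightarrow\tilde{K}_{\F}(G/H)$, and since $\Phi_{\F}$ is a bijection (Theorem \ref{theorem isom stable ktheory}), part (2) yields
\[
\{\alpha\}_{\F}(\Rep_{\F}(H))=\Phi_{\F}(\alpha_{\F}(\tilde{\Rr}_{\F}(H)))=\Phi_{\F}(\tilde{K}_{\F}(G/H))=S_{\F}(G/H).
\]
There is no real obstacle here beyond bookkeeping; the only substantive tool invoked is Lemma \ref{lemma exist orthog bundle homogeneous}, which is precisely what allows us to normalize a virtual homogeneous bundle $E_{\rho_1}-E_{\rho_2}$ into the canonical form $[E]-[m]$ required to feed it into $\Phi_{\F}$.
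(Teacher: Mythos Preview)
Your proof is correct and follows essentially the same approach as the paper: both arguments reduce part~(1) to the fact that $\alpha_{\F}$ preserves rank, and both handle the nontrivial inclusion in part~(2) by invoking Lemma~\ref{lemma exist orthog bundle homogeneous} to rewrite $E_{\rho_1}-E_{\rho_2}$ as $E_{\rho_1\oplus\rho_2^{\perp}}-m$. Your write-up is in fact slightly more explicit than the paper's (which calls part~(1) and the inclusion $\supset$ ``immediate'' and ``obvious''), but the substance is identical.
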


\begin{proof}
The first statement follows inmediatly from the definition of $\alpha_{\F}$. 

As for the second part, the inclusion $\{\alpha\}_{\F}(\Rep_{\F}(H))\subset\Phi_{\F}\circ\alpha_{\F}( \tilde{\Rr}_{\F}(H))$ is obvious.

Now, every element in $\alpha_{\F}( \tilde{\Rr}_{\F}(H))$ is of the form $E_{\rho_1}-E_{\rho_2}$, for some $\rho_1 ,\rho_2\in\Rep_{\F}(H)$ satisfying $\dim{\rho_1}=\dim{\rho_2}$. By Lemma \ref{lemma exist orthog bundle homogeneous} there exists $\rho_2^\perp\in\Rep_{\F}(H)$ such that $E_{\rho_2}\oplus E_{\rho_2^{\perp}}=m$. Thus
$$E_{\rho_1}-E_{\rho_2}= E_{\rho_1}-E_{\rho_2}+E_{\rho_2^{\perp}}-E_{\rho_2^{\perp}}=E_{\rho_1\oplus\rho_2^{\perp}}-m,$$ 
hence we have 
$$\Phi_{\F}\left( E_{\rho_1}-E_{\rho_2} \right) =\Phi_{\F}\left(E_{\rho_1\oplus\rho_2^{\perp}}-m\right) = \left\{ E_{\rho_1\oplus\rho_2^{\perp}}\right\}_{\F}$$
\end{proof}

We recall the following theorem by Pittie which relates the complex representation and $K$-theory rings of a certain class of homogeneous spaces.

\begin{thm}[Pittie, \cite{Pi}]\label{theorem free G surjection}
Let $G$ be a compact, connected Lie group such that $\pi_1(G)$ is torsion free. Let $H$ be a closed, connected subgroup of maximal rank. Then the homomorphism 
$$\alpha_{\C}: \Rr_{\C}(H)\rightarrow K_{\C}(G/H)$$
is surjective.
\end{thm}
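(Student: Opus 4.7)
The plan is to factor $\alpha_{\C}$ through equivariant K-theory and then combine Steinberg's theorem on representation rings with the even-cell structure of $G/H$. First I would identify $\alpha_{\C}$ with the forgetful map from equivariant to ordinary K-theory: for the $G$-space $G/H$ there is a natural isomorphism $K_{G}(G/H) \cong \Rr_{\C}(H)$ sending a $G$-equivariant bundle to its fiber over the identity coset (an $H$-representation), and under this identification $\alpha_{\C}$ becomes the forgetful map $K_{G}(G/H) \to K_{\C}(G/H)$.

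Next I would exploit that $H$ has maximal rank. Then $G/H$ is a generalized flag manifold admitting a CW decomposition whose cells all have even dimension (Bott, via the Bruhat decomposition on the complexification). Consequently $H^{*}(G/H;\Z)$ is torsion-free and concentrated in even degrees, so the Atiyah--Hirzebruch spectral sequence for $K^{*}(G/H)$ collapses at $E_{2}$ and $K_{\C}(G/H)$ is a free abelian group of rank $\chi(G/H)=|W_{G}|/|W_{H}|$.

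Then I would invoke Steinberg's theorem: under the hypotheses $\pi_{1}(G)$ torsion-free and $H$ of maximal rank, $\Rr_{\C}(H)$ is a free $\Rr_{\C}(G)$-module of rank $|W_{G}|/|W_{H}|$ via $i_{\C}^{*}$. Since every representation of $H$ that extends to $G$ induces a trivial bundle on $G/H$, the composition $\Rr_{\C}(G) \xrightarrow{i_{\C}^{*}} \Rr_{\C}(H) \xrightarrow{\alpha_{\C}} K_{\C}(G/H)$ lands in the subgroup $\Z \subset K_{\C}(G/H)$ generated by the trivial class via the augmentation. Hence $\alpha_{\C}$ descends to a map
$$\bar{\alpha}_{\C}\colon \Rr_{\C}(H)\otimes_{\Rr_{\C}(G)}\Z \longrightarrow K_{\C}(G/H),$$
and both source and target are free abelian groups of the same rank $|W_{G}|/|W_{H}|$. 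To conclude I would show $\bar{\alpha}_{\C}$ is an isomorphism by running the Hodgkin / Eilenberg--Moore spectral sequence for the fibration $G/H \to BH \to BG$,
$$E_{2}^{*,*}=\mathrm{Tor}^{K^{*}(BG)}_{*,*}\bigl(K^{*}(BH),\Z\bigr) \Longrightarrow K^{*}(G/H),$$
using that $K^{*}(BH)$ is free over $K^{*}(BG)$ (an Atiyah--Segal completion of Steinberg's theorem); surjectivity of $\bar{\alpha}_{\C}$, and hence of $\alpha_{\C}$, is then immediate.

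The main obstacle is precisely this last identification: establishing the collapse of the Eilenberg--Moore spectral sequence and transferring the freeness of representation rings to K-theory of classifying spaces. Steinberg's freeness theorem itself is nontrivial — it rests on the invariant theory of the Weyl pair $W_{H} \subset W_{G}$ and genuinely uses the torsion-freeness of $\pi_{1}(G)$ (which ensures that the character lattice of a maximal torus is a polynomial ring over the Weyl-invariants) — but it is available as a clean external input, and all the remaining steps are spectral-sequence bookkeeping plus the rank count from the even-cell decomposition of $G/H$.
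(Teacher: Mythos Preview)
The paper does not prove this theorem; it is quoted as a result of Pittie \cite{Pi} and used as a black box. There is therefore no argument in the paper to compare your proposal against.

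That said, your outline is essentially the modern account of Pittie's theorem (Pittie's original strategy together with Steinberg's subsequent, cleaner proof of the freeness statement), and the overall plan is sound. Two points deserve correction. First, your claim that $G/H$ is a generalized flag manifold with a Bruhat cell decomposition is not valid for arbitrary $H$ of maximal rank: that description requires $H$ to be the centralizer of a torus (equivalently, $H_{\C}$ a Levi factor of a parabolic in $G_{\C}$), which fails for instance for $\Spin(9)\subset F_{4}$, the very case the paper cares about. Fortunately this step is superfluous: once the spectral sequence collapses you obtain $K_{\C}(G/H)\cong \Rr_{\C}(H)\otimes_{\Rr_{\C}(G)}\Z$ directly, and its freeness and rank come from Steinberg, not from any cell count. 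Second, routing the argument through $K^{*}(BG)$ and $K^{*}(BH)$ drags in Atiyah--Segal completion and the question of whether freeness survives it; the cleaner formulation is Hodgkin's K\"unneth spectral sequence in equivariant $K$-theory,
\[
\mathrm{Tor}^{\Rr_{\C}(G)}_{*}\bigl(\Rr_{\C}(H),\Z\bigr)\ \Longrightarrow\ K^{*}(G/H),
\]
whose convergence is precisely where the hypothesis that $\pi_{1}(G)$ is torsion-free enters, and whose $E_{2}$-page degenerates immediately by Steinberg's freeness. Surjectivity of $\alpha_{\C}$ is then the edge-map statement.
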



\subsection{Nonnegative sectional curvature} Let $G/H$ be a homogeneous manifold. If $H$ is compact, then for every $\rho\in \Rep_{\F}(H)$ we can assume that $r(\rho(H))$ lies in some orthogonal group $\Or(n)$. Suppose that $G$ admits a metric $\la \, ,\,\ra_{G}$ of nonnegative sectional curvature which is invariant under the action of $H$ from the right (for instance a biinvariant metric in the case of compact $G$), hence inducing a nonnegatively metric on the quotient manifold $G/H$ by O'Neill's Theorem on Riemannian submersions. Endow $G\times \F^n$ with the product metric of $\la \,,\,\ra_{G}$ and the flat Euclidean metric. Then, again by O'Neill's Theorem on Riemannian submersions, $E_{\rho}$ inherits a quotient metric of nonnegative curvature of which $G\times_H \{ 0\}=G/H$ is a soul.

Now suppose that there is a homogeneous bundle in every stable class $S_{\F}(G/H)$. Then, for an arbitrary  $\F$-vector bundle  $E$ over $G/H$ there exist $\rho\in \Rep_{\F}(H)$ and $n,m\in\N$ such that 
$$E\oplus n = E_{\rho}\oplus m =  E_{\rho\oplus m }$$
Therefore $E\oplus n$ is a homogeneous vector bundle and it admits a metric with nonnegative sectional curvature. We have proved:

\begin{lem}\label{lemma nonnegative curvature}
Let $G$ be a compact Lie group and $H$ a closed subgroup. Suppose that there is a homogeneous vector bundle in every stable class $S_{\F}(G/H)$. Then for every $\F$-vector bundle $E$ there exists $k\in\N$ such that $E\oplus k_{\F}$ admits a metric with nonnegative sectional curvature.
\end{lem}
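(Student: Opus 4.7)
The entire argument is sketched in the paragraph immediately preceding the lemma, so my plan is to organize it into two clean steps: first establish that every homogeneous bundle admits a nonnegatively curved metric, and then use the hypothesis to reduce the general case to the homogeneous one after Whitney-summing with a trivial bundle.

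\textbf{Step 1: Homogeneous bundles are nonnegatively curved.} The plan is to exhibit an explicit nonnegatively curved metric on $E_\rho = (G \times \F^n)/H$ via a double submersion. Since $G$ is compact, fix a biinvariant metric $\la\,,\,\ra_G$, which has nonnegative sectional curvature. Because the representation $\rho$ lands (after $r\circ c$ if necessary) in $\Or(n)$, the diagonal action of $H$ on $G \times \F^n$ given by $(g,v)\cdot h = (gh,\rho(h)^{-1}v)$ preserves the product metric of $\la\,,\,\ra_G$ with the flat Euclidean metric on $\F^n$. The product metric is nonnegatively curved, and $H$ acts freely and isometrically, so by O'Neill's formula for Riemannian submersions the quotient $E_\rho$ inherits a complete metric of nonnegative sectional curvature. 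Moreover the zero section $G \times_H \{0\} = G/H$ is a totally geodesic submanifold (with induced nonnegatively curved submersion metric coming from $G \to G/H$) and serves as a soul.

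\textbf{Step 2: Reduce an arbitrary bundle to the homogeneous case.} Given an arbitrary $E \in \Vect_\F(G/H)$, the hypothesis provides a homogeneous bundle $E_\rho$ in the stable class $\{E\}_\F$. By definition of stable equivalence, there exist trivial bundles $n_\F, m_\F$ with $E \oplus n_\F \cong E_\rho \oplus m_\F$. The key observation is that the Whitney sum of a homogeneous bundle with a trivial bundle is again homogeneous: indeed $E_\rho \oplus m_\F = E_{\rho \oplus m}$, where $\rho \oplus m$ denotes the direct sum of $\rho$ with the trivial $m$-dimensional representation of $H$. Hence $E \oplus n_\F$ is isomorphic to a homogeneous bundle, and by Step 1 it admits a metric of nonnegative sectional curvature. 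Setting $k = n$ completes the proof.

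I do not anticipate a genuine obstacle: both steps are standard. The only small points worth being careful about are (a) checking that the representation can indeed be arranged to be orthogonal (this uses compactness of $H$ to average an inner product on $\F^n$), and (b) verifying that the isomorphism $E_\rho \oplus m_\F \cong E_{\rho \oplus m}$ really holds on the nose as homogeneous bundles, which is immediate from the construction of associated bundles applied to a direct sum of representations. Everything else is a direct quotation of O'Neill's theorem and the definition of stable equivalence.
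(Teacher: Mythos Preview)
Your proposal is correct and follows exactly the paper's own argument, which is given in the paragraph immediately preceding the lemma: endow $G\times\F^n$ with the product of a biinvariant metric and the Euclidean metric, apply O'Neill to get nonnegative curvature on $E_\rho$, and then use stable equivalence to write $E\oplus n\cong E_\rho\oplus m=E_{\rho\oplus m}$. The two small checks you flag (orthogonality of $\rho$ via compactness of $H$, and $E_\rho\oplus m_\F\cong E_{\rho\oplus m}$) are precisely the points the paper also handles in passing.
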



\section{Proof of Theorem \ref{thm complex bundles over many spaces}}\label{section proof thm complex bundles}

The Chern character induces a ring homomorphism $ch: K_{\C}(M)\rightarrow H^{*}(M,\Q)$. Atiyah and Hirzebruch studied extensively this homomorphism in \cite{AH}. A consequence of their results is the following

\begin{thm}[\cite{AH}]\label{thm atiyah hirze even dimen betti numbers}
Let $M$ be a compact manifold. Then $K_{\C}(M)$ is additively a finitely generated abelian group, and its rank equals the sum of the even-dimensional Betti numbers of $M$. Moreover, if $H^*(M,\Z)$ is torsion-free, then $K_{\C}(M)$ is free abelian, i.e.,
$$K_{\C}(M)=\underbrace{\Z\oplus \dots\oplus\Z}_{n\ \text{times}}, $$
where $n$ is the sum of the even-dimensional Betti numbers. 
\end{thm}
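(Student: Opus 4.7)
My plan is to apply the Atiyah--Hirzebruch spectral sequence (AHSS) for complex $K$-theory to $M$. Since $M$ is a compact manifold, it has the homotopy type of a finite CW complex of some dimension $d$, and the AHSS takes the form
$$E_2^{p,q} = H^p(M; K_{\C}^q(\mathrm{pt})) \Longrightarrow K_{\C}^{p+q}(M),$$
with the understanding that $K_{\C}(M) = K_{\C}^0(M)$. By Bott periodicity, $K_{\C}^q(\mathrm{pt}) = \Z$ for $q$ even and $0$ for $q$ odd, so the $E_2$ page is concentrated in the columns with $q$ even, where $E_2^{p,q} = H^p(M, \Z)$, and it vanishes outside the strip $0 \le p \le d$.

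Each $E_\infty^{p,q}$ is a subquotient of the finitely generated abelian group $H^p(M, \Z)$. Since $K_{\C}(M)$ is an iterated extension of the finitely many nonzero $E_\infty^{p,-p}$ with $p$ even, it is finitely generated. For the rank, I would invoke the Chern character $ch: K_{\C}(M) \otimes \Q \to H^{\text{even}}(M, \Q)$, a classical $\Q$-linear isomorphism obtained by rationalizing the AHSS together with the fact that $ch$ is an isomorphism on spheres. This yields $\mathrm{rank}\, K_{\C}(M) = \dim_{\Q} H^{\text{even}}(M, \Q) = \sum_{i \ge 0} b_{2i}(M)$.

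For the torsion-free statement, the objective is to show that the AHSS collapses at $E_2$. The first potentially nontrivial differential is $d_3: H^p(M, \Z) \to H^{p+3}(M, \Z)$, which Atiyah and Hirzebruch identify as the composition $\beta \circ Sq^2 \circ r_2$, where $r_2$ is reduction modulo $2$ and $\beta: H^*(M, \Z/2) \to H^{*+1}(M, \Z)$ is the integral Bockstein. The image of $\beta$ consists of $2$-torsion classes, as can be read off from the long exact sequence associated with $0 \to \Z \xrightarrow{\cdot 2} \Z \to \Z/2 \to 0$. Consequently, when $H^{p+3}(M, \Z)$ is torsion-free, $\beta$ vanishes and $d_3 = 0$. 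The higher differentials $d_r$ for odd $r \ge 5$ are likewise torsion cohomology operations, so they also vanish on the torsion-free groups $H^*(M, \Z)$. Therefore $E_\infty^{p,-p} = H^p(M, \Z)$ is free abelian for each even $p$, and since $\mathrm{Ext}^1_{\Z}(\Z^a, \Z^b) = 0$, the iterated extension splits, yielding $K_{\C}(M) \cong \bigoplus_{p \text{ even}} H^p(M, \Z) \cong \Z^n$ with $n = \sum_i b_{2i}(M)$.

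The main obstacle, in my view, is the vanishing of the higher differentials in the torsion-free case. While the Bockstein argument handles $d_3$ in a very clean way, establishing that every $d_r$ for $r \ge 5$ is a torsion operation requires the more delicate part of the AHSS machinery; one would most likely invoke this directly from Atiyah--Hirzebruch's original paper rather than rederive it.
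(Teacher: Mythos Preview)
The paper does not prove this theorem at all: it is stated with the citation \cite{AH} and used as a black box, so there is no ``paper's proof'' to compare against. Your sketch is essentially the argument Atiyah and Hirzebruch themselves give, and it is correct in outline.

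One small sharpening: for the higher differentials you do not need to identify them as specific cohomology operations. The cleanest argument is the one you already have implicitly via the Chern character. Since $ch$ induces an isomorphism $K_{\C}^*(M)\otimes\Q \cong H^{\mathrm{even}}(M,\Q)\oplus H^{\mathrm{odd}}(M,\Q)$, the rationalized AHSS collapses at $E_2$; hence every differential $d_r$ has image contained in the torsion subgroup of its target. Now argue by induction on $r$: if $H^*(M,\Z)$ is torsion-free then $E_2$ consists of torsion-free groups, and whenever $E_r$ is torsion-free the differential $d_r$ must vanish (torsion image into a torsion-free group), so $E_{r+1}=E_r$ is again torsion-free. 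This avoids having to say anything about $d_5, d_7,\dots$ individually and removes the ``obstacle'' you flagged.
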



Theorem \ref{thm atiyah hirze even dimen betti numbers} implies that manifolds $M$ in the class $X_1$ satisfy that $K_{\C}(M)=\Z$, and therefore $\tilde{K}_{\C}(M)=0$. Thus every complex vector bundle $E$ is stably trivial, i.e., for some integer $k$ the Whitney sum $E\oplus k_{\C}$ is isomorphic to a trivial bundle $M\times\C^{k'}$, and hence the product metric has nonnegative sectional curvature.

Theorem \ref{theorem free G surjection} applies directly to manifolds in the class $X_2$, and then Lemma \ref{lem relates images beta} together with Lemma \ref{lemma nonnegative curvature} completes the proof.


\section{The spheres $\mathbb{S}^n$}\label{section sphere}

As a homogeneous space, the sphere can be viewed as 
$$\mathbb{S}^{n}=\So(n+1)/\So(n)=\Spin(n+1)/\Spin(n).$$
Recall that the spin group $\Spin(n)$ is the double cover of the special orthogonal group $\So(n)$.  For $n>2$, the group $\Spin(n)$ is simply connected and so coincides with the universal cover of $\So(n)$.

\subsection{Representation rings of $\Spin(n)$}\label{subsection representation rings of spin}
 Denote by $\Lambda^1$ the canonical representation of $\So(n)$ in $\R^n$ and by $\Lambda^{k}$ the $k$-th exterior product of $\Lambda$. As usual, we set $\Lambda^0 =1$. Abusing notation, denote also by $\Lambda^{k}$ its complexification $c(\Lambda^{k})$. These representations induce representations of $\Spin(n)$ via the double covering map $\Spin(n)\rightarrow \So(n)$ which are usually denoted in the same way.
 
The representation rings of $\Spin(n)$ are known (see \cite{AdMM} and \cite{BD}, chapter VI). In the odd case $\Rr_{\C}(\Spin(2n+1))$ equals the polynomial ring:
$$\Rr_{\C}(\Spin(2n+1))=\Z [\Lambda^1,\dots,\Lambda^{n-1},\Delta].$$
The special $2^n$-dimensional representation $\Delta$ satisfies:
$$\Delta\Delta=1+\Lambda^1+\dots+\Lambda^{n-1}+\Lambda^{n}$$
In the even case, $\Rr_{\C}(\Spin(2n))$ is also a polynomial ring, namely 
$$\Rr_{\C}(\Spin(2n))=\Z [\Lambda^1,\dots,\Lambda^{n-2},\Delta_{+},\Delta_{-}].$$
The special $2^{n-1}$-dimensional representations $\Delta_{+}$, $\Delta_{-}$ satisfy:
\begin{eqnarray}
\Delta_+\Delta_+ &= &\Lambda_+^n+\Lambda^{n-2}+\Lambda^{n-4}+\dots \\
\Delta_+\Delta_- &= &\Lambda^{n-1}+\Lambda^{n-3}+\Lambda^{n-5}+\dots \label{eqnarray repr plus tensor minus} \\ 
\Delta_-\Delta_- &= &\Lambda_-^n+\Lambda^{n-2}+\Lambda^{n-4}+\dots
\end{eqnarray}
where $\Lambda_{+}^{n}$ and $\Lambda_{-}^{n}$ are irreducible representations such that $\Lambda_{+}^{n}+\Lambda_{-}^{n}=\Lambda^n$. The sums end in $\Lambda^2+1$ or $\Lambda^3+\Lambda^1$ depending on the parity of $n$.

The irreducible representations $\Lambda^{k}$ with $k\leq n-1$ (resp. $k\leq n-2$) of $\Spin(2n+1)$ (resp. $\Spin(2n)$) are real, meaning that they are the complexification of a real representation. Moreover (see \cite{BD}, chapter VI), we have the following:

\begin{prop}\label{propo type of representation spin}
For $n\equiv m\ (\textrm{mod}\ 8)$, the special representations $\Delta$, $\Delta_+$ and $\Delta_-$ of $\Spin(n)$ have the following type:

\begin{center}
\begin{tabular}{  c | c | c | c | c | c | c | c | c }
                  
  m & 0 & 1 & 2 & 3 & 4 & 5 & 6 & 7\\
   \hline    
 $\text{Type}$ & $\R$ & $\R$ & $\C$ & $\Hq$ & $\Hq$ & $\Hq$ & $\C$ & $\R$ \\

\end{tabular} 
\end{center}
\end{prop}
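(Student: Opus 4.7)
The plan is to derive the table from the Clifford-algebra construction of the spin representations together with the mod-$8$ periodicity of real Clifford algebras. Let $\mathrm{Cl}(n)$ denote the real Clifford algebra of $\R^n$ with its standard negative-definite form and $\mathrm{Cl}^0(n)$ its even subalgebra. One has the canonical isomorphism $\mathrm{Cl}^0(n)\cong \mathrm{Cl}(n-1)$, and $\Spin(n)$ sits inside $\mathrm{Cl}^0(n)$. The complex spin representations $\Delta$ (for odd $n$) and $\Delta_+,\Delta_-$ (for even $n$) arise as the restrictions to $\Spin(n)$ of the irreducible complex modules over $\C\otimes_\R\mathrm{Cl}^0(n)$.

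I would then invoke the classical periodic table of real Clifford algebras: from Bott's isomorphism $\mathrm{Cl}(n+8)\cong\mathrm{Cl}(n)\otimes_\R M_{16}(\R)$ together with the eight base cases, one sees that $\mathrm{Cl}(n-1)$ is isomorphic to a matrix algebra $M_{2^k}(\F_n)$ or to a direct sum $M_{2^k}(\F_n)\oplus M_{2^k}(\F_n)$, where the division algebra $\F_n\in\{\R,\C,\Hq\}$ depends only on $n\bmod 8$. A direct check shows that the sequence of $\F_n$'s is precisely the one listed in the table, and the splitting into two matrix factors occurs exactly when $n-1$ is odd, i.e.\ when $n$ is even, matching the decomposition $\Delta=\Delta_+\oplus\Delta_-$.

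The final step is to translate $\F_n$ into the Frobenius--Schur type of the complex spin representation. The irreducible $\mathrm{Cl}^0(n)$-module carries a tautological $\F_n$-action that commutes with the whole algebra, and in particular with $\Spin(n)\subset\mathrm{Cl}^0(n)$. If $\F_n=\R$, this gives a $\Spin(n)$-invariant real structure on the complex module, so $\Delta$ (or each $\Delta_\pm$) is the complexification of a real $\Spin(n)$-representation. If $\F_n=\Hq$, multiplication by $j$ yields a $\Spin(n)$-equivariant quaternionic structure, so $\Delta$ is of type $\Hq$. If $\F_n=\C$, the complex structure supplied by the algebra agrees with the intrinsic one and no invariant real or quaternionic form can exist, so $\Delta$ is of type $\C$.

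The main obstacle is the uniform treatment of the two half-spin representations in the even case. One has to identify the two simple summands of $\mathrm{Cl}(n-1)$ with $\Delta_+$ and $\Delta_-$ respectively, and then verify that both carry \emph{the same} reality type. This can be done either by noting that the two summands of $\mathrm{Cl}(n-1)$ are isomorphic as $\F_n$-algebras, or by invoking the outer automorphism of $\Spin(n)$ that interchanges $\Delta_\pm$; either argument transports the $\F_n$-structure from one half to the other. Once this bookkeeping is done, the proposition reduces to reading off the periodic table of real Clifford algebras.
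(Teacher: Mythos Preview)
The paper does not give its own proof of this proposition: it merely states the result with a reference to \cite{BD}, Chapter~VI. Your proposal therefore supplies considerably more than the paper itself does. The Clifford-algebra route you outline is the standard one (and is essentially what one finds in the cited source or in Atiyah--Bott--Shapiro), so your approach is sound in spirit and consistent with the reference the paper invokes.

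One point needs correcting. The \emph{real} Clifford algebra $\mathrm{Cl}(n-1)$ (negative-definite convention) splits as a direct sum of two matrix factors only when $n-1\equiv 3\pmod 4$, not for all odd $n-1$. The splitting that produces $\Delta_+\oplus\Delta_-$ in the even case is that of the \emph{complexified} algebra $\C\otimes_{\R}\mathrm{Cl}(n-1)\cong\mathrm{Cl}_{\C}(n-1)$, which does split exactly when $n-1$ is odd. This affects your final paragraph: for $n\equiv 2$ or $6\pmod 8$ (the complex-type cases) the real algebra $\mathrm{Cl}(n-1)$ is simple, so there are not ``two simple summands of $\mathrm{Cl}(n-1)$'' to identify with $\Delta_\pm$; the two half-spin modules appear only after complexification, and the argument that both carry the same $\F_n$-structure must be rephrased accordingly (e.g.\ the unique simple real $\mathrm{Cl}(n-1)$-module carries a commuting $\C$-action, and its complexification splits into the $\pm i$-eigenspaces, neither of which admits an invariant real or quaternionic form). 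The conclusion of the table is unaffected.
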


In the case when $\Delta_+$, $\Delta_-$ or $\Delta$ are of real type we denote both the underlying real representation (not to be mistaken for the real restriction) and its complexification in the same way. 
 

Consider the standard inclusion 
$$\begin{array}{ccc} 
\So(n) & \rightarrow & \So(n+1)\\
\  \ A & \mapsto & \left(\begin{array}{cc}
A & 0 \\
0 & 1
\end{array}\right)
\end{array}$$ 
and its covering group homomorphism $i_n:\Spin(n)\rightarrow\Spin(n+1)$. The following relations hold (see \cite{BD}, chapter VI):
\begin{eqnarray}
 && i_{2n,\C}^*(\Lambda^{k}) = \Lambda^k+\Lambda^{k-1}\quad \text{for}\  1\leq k\leq n\label{eqnarray repr exterior derivatives even} \\
& &i_{2n,\C}^*(\Delta)=\Delta_+ +\Delta_-\label{eqnarray repr sum of plus minus special} \\
& & i_{2n-1,\C}^*(\Lambda^{k}) = \Lambda^k+\Lambda^{k-1}\quad \text{for}\ 1\leq k\leq n-1\label{eqnarray repr exterior derivatives odd} \\
& & i_{2n-1,\C}^*(\Lambda_{\pm}^{n})=\Lambda^{n-1} \\
 & & i_{2n-1,\C}^*(\Delta_{\pm})=\Delta \label{eqnarray repr delta special} 
\end{eqnarray}
Thus we get identities on the corresponding stable classes of complex vector bundles over the sphere:
\begin{cor}\label{cor stable classes sphere spin}
The following relations hold:
\begin{itemize}
\item Over  $\mathbb{S}^{2n}=\Spin(2n+1)/\Spin(2n)$,
\begin{eqnarray}
 & & \{ E_{\Lambda^k}\}_{\C} = \{ 1\}_{\C}\quad \text{for}\ 1\leq k\leq n\label{eqnarray exterior derivatives even} \\
  & &\{ E_{\Delta_{+}}\}_{\C}+ \{ E_{\Delta_{-}}\}_{\C}= \{ 1\}_{\C}\label{eqnarray sum of plus minus special} \\
 & &\{ E_{\Delta_{+}\otimes_{\C}\Delta_{-}}\}_{\C} = \{ 1\}_{\C}\label{eqnarray plus tensor minus} \\
& & \{ E_{\Delta_{+}\otimes_{\C}\Delta_{+}}\}_{\C}=2^n \{ E_{\Delta_{+}}\}_{\C} \label{eqnarray plus tensor plus special} 
\end{eqnarray}
\item Over  $\mathbb{S}^{2n-1}=\Spin(2n)/\Spin(2n-1)$,
\begin{eqnarray}
 & & \{ E_{\Lambda^k}\}_{\C} = \{ 1\}_{\C}\quad \text{for}\ 1\leq k\leq n-1\label{eqnarray exterior derivatives odd} \\
  & &\{ E_{\Delta}\}_{\C} = \{ 1\}_{\C}\label{eqnarray delta special} 
\end{eqnarray}
\end{itemize}
\end{cor}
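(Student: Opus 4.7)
The plan is to read each identity as a consequence of a corresponding identity in the complex representation ring of $\Spin(2n)$ or $\Spin(2n-1)$, using two ingredients: the semiring morphism $\alpha_\C:\Rr_\C(H)\to K_\C(G/H)$, and the observation recalled after Lemma \ref{lemma exist orthog bundle homogeneous} that $E_\rho$ is a trivial bundle whenever $\rho$ is the restriction of a representation of the ambient group. Passing to stable classes via the isomorphism $\Phi_\C$ of Theorem \ref{theorem isom stable ktheory} will then yield the desired relations.

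For (\ref{eqnarray exterior derivatives even}), I would invoke (\ref{eqnarray repr exterior derivatives even}) to conclude that $E_{\Lambda^k}\oplus E_{\Lambda^{k-1}}$ is trivial over $\mathbb{S}^{2n}$, so that $\{E_{\Lambda^k}\}_\C + \{E_{\Lambda^{k-1}}\}_\C = \{1\}_\C$ in the group $S_\C(\mathbb{S}^{2n})$; since $\{E_{\Lambda^0}\}_\C=\{1\}_\C$ is the identity element, an induction on $k$ gives the claim, and (\ref{eqnarray exterior derivatives odd}) is proved identically from (\ref{eqnarray repr exterior derivatives odd}). Identity (\ref{eqnarray sum of plus minus special}) is immediate from (\ref{eqnarray repr sum of plus minus special}), and (\ref{eqnarray delta special}) from (\ref{eqnarray repr delta special}), as both express the relevant representation as a restriction from the larger spin group. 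For (\ref{eqnarray plus tensor minus}), I would apply the semiring morphism $\alpha_\C$ to the representation identity $\Delta_+\Delta_-=\Lambda^{n-1}+\Lambda^{n-3}+\cdots$, exhibiting $E_{\Delta_+\otimes_\C\Delta_-}$ as an iterated Whitney sum of bundles $E_{\Lambda^k}$ with $k\leq n-1$, each stably trivial by (\ref{eqnarray exterior derivatives even}).

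The delicate case is (\ref{eqnarray plus tensor plus special}), which I expect to be the main obstacle. The factor $2^n$ cannot be obtained purely from representation-theoretic relations: applying $\alpha_\C$ to $\Delta_+\Delta_+=\Lambda_+^n+\Lambda^{n-2}+\cdots$ together with (\ref{eqnarray exterior derivatives even}) only yields $\{E_{\Delta_+\otimes_\C\Delta_+}\}_\C = \{E_{\Lambda_+^n}\}_\C$, and the available identities do not determine $\{E_{\Lambda_+^n}\}_\C$ in terms of $\{E_{\Delta_+}\}_\C$. My plan here is to invoke the additional $K$-theoretic fact that the ring $\tilde K_\C(\mathbb{S}^{2n})$ has trivial multiplication (a standard consequence of Bott periodicity, or equivalently of the nullhomotopy of the reduced diagonal $\mathbb{S}^{2n}\to\mathbb{S}^{2n}\wedge\mathbb{S}^{2n}\simeq\mathbb{S}^{4n}$). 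Applied to $\xi:=[E_{\Delta_+}]-[2^{n-1}]\in\tilde K_\C(\mathbb{S}^{2n})$, the relation $\xi^2=0$ expands in $K_\C(\mathbb{S}^{2n})$ to
\[
[E_{\Delta_+\otimes_\C\Delta_+}] - 2^n[E_{\Delta_+}] + [2^{2n-2}] = 0,
\]
and applying $\Phi_\C$ produces (\ref{eqnarray plus tensor plus special}).
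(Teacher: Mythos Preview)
Your argument is correct throughout. For (\ref{eqnarray exterior derivatives even}), (\ref{eqnarray sum of plus minus special}), (\ref{eqnarray plus tensor minus}), (\ref{eqnarray exterior derivatives odd}), (\ref{eqnarray delta special}) you follow exactly the paper's proof.

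For (\ref{eqnarray plus tensor plus special}) you take a genuinely different route. The paper stays entirely inside the representation ring: it observes that $\Delta_+\Delta_+ + \Delta_-\Delta_- = \Lambda^n + 2\Lambda^{n-2} + \cdots$ is the restriction of a representation of $\Spin(2n+1)$, and then substitutes $\Delta_- = i_{2n,\C}^*(\Delta) - \Delta_+$ to obtain
\[
2\Delta_+\Delta_+ + i_{2n,\C}^*(\Delta\Delta) = i_{2n,\C}^*(\rho) + 2\,i_{2n,\C}^*(\Delta)\,\Delta_+.
\]
Applying $\alpha_\C$ and using that $E_{i_{2n,\C}^*(\Delta)}$ is the trivial bundle of rank $2^n$ gives $2\{E_{\Delta_+\otimes\Delta_+}\}_\C = 2\cdot 2^n\{E_{\Delta_+}\}_\C$, and one cancels the factor $2$ because $S_\C(\mathbb{S}^{2n})\cong\Z$ is torsion-free. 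So your claim that the factor $2^n$ ``cannot be obtained purely from representation-theoretic relations'' is a bit too strong: it can, at the cost of this final cancellation step. Your route via $\xi^2=0$ in $\tilde K_\C(\mathbb{S}^{2n})$ is cleaner in that it yields the identity on the nose with no division, but it imports the ring structure of $\tilde K_\C(\mathbb{S}^{2n})$ as an external fact, whereas the paper's computation needs only its additive structure. Either approach is perfectly adequate here.
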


\begin{proof}
 The relations (\ref{eqnarray sum of plus minus special}) and (\ref{eqnarray delta special}) follow immediately from (\ref{eqnarray repr sum of plus minus special}) and (\ref{eqnarray repr delta special}). The relations (\ref{eqnarray exterior derivatives even}) and (\ref{eqnarray exterior derivatives odd}) follow recursively from (\ref{eqnarray repr exterior derivatives even}) and (\ref{eqnarray repr exterior derivatives odd}) respectively, since $\Lambda^0=1=i_{2n,\C}^*(1)$. The latter, together with  (\ref{eqnarray repr plus tensor minus}), gives us (\ref{eqnarray plus tensor minus}). Finally, observe that 
\begin{eqnarray}
\Delta_+\Delta_+ +\Delta_-\Delta_- &=& \Lambda_+^n+ \Lambda_-^n+2\Lambda^{n-2}+2\Lambda^{n-4}+\dots\nonumber \\
&=&\Lambda^n +2\Lambda^{n-2}+2\Lambda^{n-4}+\dots\nonumber  \\
&=& i_{2n,\C}^*(\rho) \label{eqnarray triviality of plusplus minusminus}
\end{eqnarray}
for some $\rho\in\Rep_{\C}(\Spin(2n+1))$. On the other hand, by  (\ref{eqnarray repr sum of plus minus special}) we have that $\Delta_- =i_{2n,\C}^*(\Delta)- \Delta_+$, hence:
\begin{eqnarray}\label{equation plusplus minusminus}
\Delta_+\Delta_+ +\Delta_-\Delta_- &=& \Delta_+\Delta_+ + (i_{2n,\C}^*(\Delta)- \Delta_+)(i_{2n,\C}^*(\Delta)- \Delta_+)\nonumber \\
&=& 2\Delta_+\Delta_+ +  i_{2n,\C}^*(\Delta\Delta) - 2i_{2n,\C}^*(\Delta)\Delta_{+}
\end{eqnarray}
Combining (\ref{eqnarray triviality of plusplus minusminus}) and (\ref{equation plusplus minusminus}) we get
$$2\Delta_+\Delta_+ + i_{2n,\C}^*(\Delta\Delta)= i_{2n,\C}^*(\rho) + 2i_{2n,\C}^*(\Delta)\Delta_{+}$$
which proves (\ref{eqnarray plus tensor plus special}) since $E_{i_{2n,\C}^*(\Delta)}=E_{\dim{\Delta}}=2^n$.
\end{proof}

\subsection{The $K$-theory of the sphere}

The rings $K_{\F}(\mathbb{S}^{n})$ are well known (see \cite{MT}, chapter IV). In the complex case:
$$\tilde{K}_{\C}(\mathbb{S}^{2n+1})=0, \qquad \tilde{K}_{\C}(\mathbb{S}^{2n})=\Z. $$
In the real case:
$$
\begin{array}{lcl}
\tilde{K}_{\R}(\mathbb{S}^{8n})=\Z, & \qquad & 
\tilde{K}_{\R}(\mathbb{S}^{8n+4})=\Z, \\
\tilde{K}_{\R}(\mathbb{S}^{8n+1})=\Z_2, & \qquad & \tilde{K}_{\R}(\mathbb{S}^{8n+5})=0, \\
\tilde{K}_{\R}(\mathbb{S}^{8n+2})=\Z_2, & \qquad & \tilde{K}_{\R}(\mathbb{S}^{8n+6})=0, \\
\tilde{K}_{\R}(\mathbb{S}^{8n+3})=0, & \qquad & 
\tilde{K}_{\R}(\mathbb{S}^{8n+7})=0. 
\end{array}
$$

\medskip
 
\subsection{Proof of the Main Theorem for $\mathbb{S}^n$}

\begin{prop}\label{propo stable sphere}
The map
$$\{\alpha\}_{\F}: \Rep_{\F}(\Spin(n)) \rightarrow  S_{\F}(\mathbb{S}^n) $$
is surjective for all $n\in\N$ both in the real and in the complex case. Moreover, the stable classes in the cases in which $\tilde{K}_{\F}(\mathbb{S}^ {n})\neq 0$ are given by 
$$
\begin{array}{lcl}
S_{\C}(\mathbb{S}^ {2n})=\Z\{ E_{\Delta_+}\}_{\C}, & \qquad &  \\
S_{\R}(\mathbb{S}^ {8n})=\Z\{ E_{\Delta_+}\}_{\R}, & \qquad & S_{\R}(\mathbb{S}^ {8n+2})=\{ \{ 1\}_{\R}, \{ E_{r(\Delta_+ )}\}_{\R} \}, \\
S_{\R}(\mathbb{S}^ {8n+1})=\left\{ \{ 1\}_{\R}, \{ E_{\Delta}\}_{\R} \right\}, &  \qquad & S_{\R}(\mathbb{S}^ {8n+4})=\Z\{ E_{r(\Delta_+ )}\}_{\R}.
\end{array}
$$

\end{prop}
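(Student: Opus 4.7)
My plan is to establish the complex case first using Pittie's theorem (Theorem \ref{theorem free G surjection}) and extract an explicit generator via Corollary \ref{cor stable classes sphere spin}, then bootstrap to the real case by combining the commutative diagrams for complexification $c$ and realification $r$ with the type information in Proposition \ref{propo type of representation spin}. For odd spheres $\tilde{K}_{\C}(\mathbb{S}^{2n+1})=0$, so the complex statement is trivial. For $\mathbb{S}^{2n}=\Spin(2n+1)/\Spin(2n)$, both Spin groups have rank $n$ and $\Spin(2n+1)$ is compact, connected and simply connected for $n\geq 1$, so Pittie gives surjectivity of $\alpha_{\C}\colon \Rr_{\C}(\Spin(2n))\to K_{\C}(\mathbb{S}^{2n})$, and Lemma \ref{lem relates images beta} promotes this to surjectivity of $\{\alpha\}_{\C}$. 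Since $\Rr_{\C}(\Spin(2n))$ is polynomial on $\Lambda^1,\ldots,\Lambda^{n-2},\Delta_+,\Delta_-$, and Corollary \ref{cor stable classes sphere spin} gives $\{E_{\Lambda^k}\}_{\C}=\{1\}_{\C}$ together with the reductions $\{E_{\Delta_-}\}_{\C}=-\{E_{\Delta_+}\}_{\C}$, $\{E_{\Delta_+\otimes\Delta_-}\}_{\C}=\{1\}_{\C}$ and $\{E_{\Delta_+\otimes\Delta_+}\}_{\C}=2^n\{E_{\Delta_+}\}_{\C}$, every polynomial descends to an integer multiple of $\{E_{\Delta_+}\}_{\C}$; hence $S_{\C}(\mathbb{S}^{2n})=\Z\{E_{\Delta_+}\}_{\C}$.

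For the real case I would treat each nontrivial residue class $n\bmod 8$ separately, exploiting the type of $\Delta$ or $\Delta_{\pm}$ and the real Bott periodicity description of $c,r$. When $n\equiv 0\pmod 8$, $\Delta_+$ is of real type and $c\colon \tilde{K}_{\R}(\mathbb{S}^{8m})\to\tilde{K}_{\C}(\mathbb{S}^{8m})$ is an isomorphism $\Z\to\Z$, so the equality $c(\{E_{\Delta_+}\}_{\R})=\{E_{\Delta_+}\}_{\C}$ forces $\{E_{\Delta_+}\}_{\R}$ to generate. When $n\equiv 4\pmod 8$, $\Delta_+$ is quaternionic, so we switch to $r(\Delta_+)$; the identity $c(r(\Delta_+))=2\Delta_+$, combined with the fact that $c$ has image $2\Z$ in this degree, identifies $\{E_{r(\Delta_+)}\}_{\R}$ as a generator. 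When $n\equiv 2\pmod 8$, $\Delta_+$ is complex and the realification $r\colon\tilde{K}_{\C}(\mathbb{S}^{8m+2})=\Z\twoheadrightarrow\tilde{K}_{\R}(\mathbb{S}^{8m+2})=\Z_2$ is a surjection, so $\{E_{r(\Delta_+)}\}_{\R}=r(\{E_{\Delta_+}\}_{\C})$ is the nontrivial element. Surjectivity of $\{\alpha\}_{\R}$ in these three cases then follows, using Lemma \ref{lem relates images beta} a second time.

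The main obstacle is the case $n\equiv 1\pmod 8$, where $\tilde{K}_{\C}(\mathbb{S}^{8m+1})=0$ and the complexification diagram carries no information. Here I would combine naturality under the chain $\Spin(8m)\subset\Spin(8m+1)\subset\Spin(8m+2)$, which by (\ref{eqnarray repr delta special}) and (\ref{eqnarray repr sum of plus minus special}) identifies $\Delta$ of $\Spin(8m+1)$ as the restriction of $\Delta_{\pm}$ of $\Spin(8m+2)$ and in turn restricts to $\Delta_+\oplus\Delta_-$ of $\Spin(8m)$, with the $\eta$-module structure of real $K$-theory, so as to realise $\{E_\Delta\}_{\R}$ as the image of the $\mathbb{S}^{8m}$-generator under multiplication by the Bott element $\eta\in\tilde{K}_{\R}(\mathbb{S}^1)$, thereby exhibiting it as the nontrivial element of $\tilde{K}_{\R}(\mathbb{S}^{8m+1})=\Z_2$ and completing the proof.
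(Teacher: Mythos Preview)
Your treatment of the complex case and of the real residues $n\equiv 0,2,4\pmod 8$ is essentially the paper's argument: invoke Pittie's theorem and Lemma~\ref{lem relates images beta} to get surjectivity of $\{\alpha\}_{\C}$, read off the generator $\{E_{\Delta_+}\}_{\C}$ from Corollary~\ref{cor stable classes sphere spin}, and then transport to the real side using that $c$ is an isomorphism in degree $8m$, that $r$ is surjective in degree $8m+2$, and that $r$ (equivalently: $c$ has image $2\Z$) is an isomorphism in degree $8m+4$. The paper cites these last facts from \cite{MT}, Chapter~IV, Theorems~5.12 and~6.1, exactly as you do implicitly.

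Where you diverge is the case $n\equiv 1\pmod 8$. The paper does \emph{not} use the $\eta$--module structure; instead it shows directly, inside the representation ring, that $E_\Delta$ is not stably trivial. Concretely, $E_\Delta$ stably trivial would mean $\Delta+k=i_{\R}^*(\tau)$ for some $\tau\in\Rep_{\R}(\Spin(8m+2))$; complexifying and using that $\Rr_{\C}(\Spin(8m+2))=\Z[\Lambda^1,\dots,\Lambda^{4m-1},\Delta_+,\Delta_-]$ together with the branching rules $i_{\C}^*(\Lambda^k)=\Lambda^k+\Lambda^{k-1}$ and $i_{\C}^*(\Delta_\pm)=\Delta$, one finds that $c(\tau)$ must equal $k+\Delta_+$ or $k+\Delta_-$. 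But by Proposition~\ref{propo type of representation spin} the half-spin representations of $\Spin(8m+2)$ are of complex type, hence not complexifications, a contradiction. The order-two statement then follows from $r\circ c=2$ and $\tilde{K}_{\C}(\mathbb{S}^{8m+1})=0$. This argument is completely self-contained in representation theory and avoids any appeal to the ring structure of $KO^*(\mathrm{pt})$.

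Your proposed route via $\eta$ is in principle viable, but as written it has a gap: the ``naturality under the chain $\Spin(8m)\subset\Spin(8m+1)\subset\Spin(8m+2)$'' relates \emph{representations} by restriction, but the associated homogeneous bundles live over \emph{different} spheres ($\mathbb{S}^{8m}$ versus $\mathbb{S}^{8m+1}$), and there is no map between these quotients induced by the inclusion of isotropy groups along which you could pull back $E_\Delta$ to compare it with $E_{\Delta_+}$. To make the $\eta$--multiplication argument rigorous you would need an external input identifying $[E_\Delta]-2^{4m}$ with the Atiyah--Bott--Shapiro generator of $\tilde{K}_{\R}(\mathbb{S}^{8m+1})$, i.e.\ essentially the ABS isomorphism between the Clifford-module Grothendieck group and $KO^{-*}(\mathrm{pt})$; once that is in hand the statement is immediate, but it is a substantially heavier piece of machinery than the paper's elementary contradiction.
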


\begin{proof} The surjectivity of $\{\alpha\}_{\C}$ is included in Theorem \ref{thm complex bundles over many spaces}. From Corollary \ref{cor stable classes sphere spin} it follows that $S_{\C}(\mathbb{S}^ {2n})=\Z\{ E_{\Delta_+}\}_{\C}$.

The surjectivity of $\{\alpha\}_{\R}$ when $n\equiv 3,5,6,7\ (\textrm{mod}\ 8)$ is trivial since $\tilde{K}_{\R}(\mathbb{S}^n)=0$. 

Now let $E$ be an arbitrary real vector bundle over $\mathbb{S}^{n}$ for the remaining cases:

\begin{itemize}
\item $n\equiv  0\ (\textrm{mod}\ 8)$. By Theorem 5.12 in \cite{MT}, chapter IV, the map
$$c: \tilde{K}_{\R}(\mathbb{S}^{n}) \rightarrow  \tilde{K}_{\C}(\mathbb{S}^{n})\cong S_{\C}(\mathbb{S}^ {n})=\Z\{ E_{\Delta_+}\}_{\C} $$
is an isomorphism. From Proposition \ref{propo type of representation spin} we know that $\Delta_{+}$ is real and therefore 
$$ S_{\R}(\mathbb{S}^ {n})=\Z\{ E_{\Delta_+}\}_{\R}.$$

\item $n\equiv 2,4\ (\textrm{mod}\ 8)$. By Theorem 6.1 in \cite{MT}, chapter IV, the real restriction map  for $n\equiv 2\ (\textrm{mod}\ 8)$ (resp. $n\equiv 4\ (\textrm{mod}\ 8)$)
$$r:\tilde{K}_{\C}(\mathbb{S}^{n})\rightarrow \tilde{K}_{\R}(\mathbb{S}^{n})$$
is surjective (resp. an isomorphism). Therefore 
$$ 
\begin{array}{l}
S_{\R}(\mathbb{S}^ {n})=\{ \{ 1\}_{\R}, \{ E_{r(\Delta_+ )}\}_{\R} \}\ \text{ if }  n\equiv 2\ (\textrm{mod}\ 8).\\
 S_{\R}(\mathbb{S}^ {n})=\Z\{ E_{r(\Delta_+ )}\}_{\R}\ \text{ if }  n\equiv 4\ (\textrm{mod}\ 8).
 \end{array}
 $$
 
\item $n\equiv 1\ (\textrm{mod}\ 8)$. By Proposition \ref{propo type of representation spin} the representation $\Delta$ is real. We are going to prove that $\{ E_{\Delta}\}_{\R}$ is not trivial and hence 
$$S_{\R}(\mathbb{S}^ {8n+1})=\Z_2 =\{ \{ 1\}_{\R}, \{ E_{\Delta}\}_{\R} \}.$$

Denote by $i_{\F}^{*}$ the map $i_{8n+1,\F}^{*}$. We want to see that there does not exist $\tau\in\Rep_{\R}(\Spin(8n+2))$ such that $i_{\R}^*(\tau)=\Delta + k$, for any natural number $k$. Suppose it does; then $c(\tau)\in\Rep_{\C}(\Spin(8n+2))$ is of the form
$$
c(\tau)  = \displaystyle\sum_{j_1,\dots,j_{4n+1}}a_{j_1,\dots,j_{4n+1}}(\Lambda^{1})^{j_1}\dots (\Lambda^{4n-1})^{j_{4n-1}}(\Delta_+)^{j_{4n}}(\Delta_-)^{j_{4n+1}} $$
We can rewrite this expression as
$$c(\tau)  = \displaystyle\sum_{l_1,l_2}b_{l_1,l_2}(\Lambda^{1},\dots ,\Lambda^{4n-1})(\Delta_+)^{l_{1}}(\Delta_-)^{l_{2}}
$$
for the obvious polynomials $b_{l_1 ,l_2}\in\Z[\Lambda^1,\dots,\Lambda^{4n-1} ]$. Now we have:
$$
\begin{array}{rcl}
i_{\C}^{*} \left( c(\tau )\right) & = & \displaystyle\sum_{l_1,l_2}i_{\C}^{*}\left(a_{l_1,l_2}(\Lambda^{1},\dots ,\Lambda^{4n-1})\right)i_{\C}^{*}(\Delta_+)^{l_{1}}i_{\C}^{*}(\Delta_-)^{l_{2}} \\
 & = & \displaystyle\sum_{l_1,l_2}a_{l_1,l_2}(\Lambda^{1}+1,\dots ,\Lambda^{4n-1}+\Lambda^{4n-2})(\Delta)^{l_{1}+l_{2}}
\end{array}
$$
On the other hand,
$$c\left(i_{\R}^*(\tau)\right)=c(\Delta + k)=\Delta + k\in\Rr_{\C}(\Spin(8n+1))$$
From the identity $i_{\C}^{*}\circ c = c \circ i_{\R}^{*}$, it follows that
$$
\left\{
\begin{array}{l}
a_{0,0}(\Lambda^{1}+1,\dots ,\Lambda^{4n-1}+\Lambda^{4n-2}) =k \\
a_{1,0}(\Lambda^{1}+1,\dots ,\Lambda^{4n-1}+\Lambda^{4n-2})+a_{0,1}(\Lambda^{1}+1,\dots ,\Lambda^{4n-1}+\Lambda^{4n-2}) =1 \\
a_{i,j}(\Lambda^{1}+1,\dots ,\Lambda^{4n-1}+\Lambda^{4n-2})=0\ \text{ if } i+j\geq 2
\end{array}
\right.
$$
The map $\phi:\Z[\Lambda^1,\dots,\Lambda^{4n-1} ]\rightarrow \Z[\Lambda^1,\dots,\Lambda^{4n-1} ]$ defined by the rule $\phi (\Lambda^{k})=\Lambda^{k}+\Lambda^{k-1}$ for $k\geq 1$, is a ring isomorphism. The inverse is given recursively as $\phi^{-1} (\Lambda^{k})=\Lambda^{k}-\phi^{-1}(\Lambda^{k-1})$, where $\phi^{-1}(\Lambda^{1})=\Lambda^1 -1$. Therefore we have that 
$$
\left\{
\begin{array}{l}
a_{0,0}(\Lambda^{1},\dots ,\Lambda^{4n-1}) =k \\
a_{1,0}(\Lambda^{1},\dots ,\Lambda^{4n-1})+a_{0,1}(\Lambda^{1},\dots ,\Lambda^{4n-1}) =1 \\
a_{i,j}(\Lambda^{1},\dots ,\Lambda^{4n-1})=0\ \text{ if } i+j\geq 2
\end{array}
\right.
$$

We deduce that $c(\tau)$ equals either $k +\Delta_{+}$ or $k +\Delta_{-}$. It then would follow that either $\Delta_{+}$ or $\Delta_{-}$ is in the image of the complexification map. But this is a contradiction since as we can see in Proposition \ref{propo type of representation spin}, the representations $\Delta_{+}$ and $\Delta_{-}$ are not of real type.

Finally, let $d$ be the dimension of the real representation $\Delta$. Observe that
$$2\left( E_{\Delta} - d_\R \right)=r\circ c\left( E_{\Delta} - d_\R \right)=r\left( E_{\Delta} - d_\C \right)=0,$$ 
since $r: \tilde{K}_{\C} (\mathbb{S}^ {8n+1})\rightarrow  \tilde{K}_{\R} (\mathbb{S}^ {8n+1})$ is the zero map. It follows that $2\{ E_\Delta\}_{\R}=\{ 1\}_{\R}$.
\end{itemize}

\end{proof}

\subsection{Proof of Theorem \ref{thm sphere rank bounds}}

The proof follows from Proposition \ref{propo stable sphere} together with Theorem \ref{thm stability of bundles high rank}. Let $E$ be an arbitrary real vector bundle over the sphere $\mathbb{S}^n$. If $E$ is stably trivial, then the Whitney sum $E\oplus k$ is isomorphic to a trivial bundle if $\text{rank}(E\oplus k) \geq n+1$.

\begin{itemize}

\item  $n\equiv 3,5,6,7\ (\textrm{mod}\ 8)$. Since $\tilde{K}_{\R}(\mathbb{S}^n)=0$, every bundle is stably trivial so $k_0 \leq n+1$.

\item $n\equiv  1\ (\textrm{mod}\ 8)$. Assume that $E\in\{ E_{\Delta}\}_{\R}$. Since $\dim{\Delta}=2^n \geq n+1$, it follows that if $\text{rank}(E\oplus k) \geq 2^n$ then $E\oplus k$ is isomorphic to $E_{\Delta}\oplus k' =E_{\Delta\oplus k'}$, so $k_0 \leq 2^n$.

\item $n\equiv 2\ (\textrm{mod}\ 8)$ is analogue to the case $n\equiv  1\ (\textrm{mod}\ 8)$ since $\dim{r(\Delta_+)}= 2\cdot 2^{n-1}=2^n$.

\end{itemize}
\medskip
For the remaining cases we need the so-called Bott Integrability Theorem:

\begin{thm}[Corollary 9.8 in \cite{Hu}, Chapter 20]
Let $a\in H^{2n}(\mathbb{S}^{2n},\Z)$ be a generator. Then for each complex vector bundle $E$ over $\mathbb{S}^{2n}$, the $n$-th Chern class $c_n(E)$ is a multiple of $(n-1)!a$, and for each $m\equiv 0\ (\textrm{mod}\ (n - 1)!)$ there exists a unique $\{ E\}_{\C}\in S_{\C}(\mathbb{S}^{2n})$ such that $c_n (E)=ma$.
\end{thm}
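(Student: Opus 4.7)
The plan is to compute $\tilde{K}_{\C}(\mathbb{S}^{2n})$ via Bott periodicity, pin down the Chern character of its generator, and translate the result to Chern classes using Newton's identities together with the vanishing of intermediate cohomology of the sphere. Concretely, by complex Bott periodicity $\tilde{K}_{\C}(\mathbb{S}^{2n}) \cong \Z$ is generated by the $n$-fold external product $\beta^{n}$, where $\beta = [\mathcal{O}(1)] - [\underline{\C}] \in \tilde{K}_{\C}(\mathbb{S}^{2})$ is the Bott element with $\mathcal{O}(1)$ the tautological line bundle over $\mathbb{S}^{2} = \CP^{1}$. A direct calculation gives $\mathrm{ch}(\mathcal{O}(1)) = e^{c_{1}(\mathcal{O}(1))} = 1 + a_{1}$ (higher terms vanish because $H^{2k}(\mathbb{S}^{2};\Z) = 0$ for $k \geq 2$), so $\mathrm{ch}(\beta) = a_{1}$ is the standard generator of $H^{2}(\mathbb{S}^{2};\Z)$; by multiplicativity of the Chern character under external products, $\mathrm{ch}(\beta^{n}) = a$, the standard generator of $H^{2n}(\mathbb{S}^{2n};\Z)$.

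Next, I would represent $\beta^{n}$ by an honest bundle of controlled rank: since every class in $\tilde{K}_{\C}(\mathbb{S}^{2n})$ can be written as $[E_{0}] - [\underline{\C}^{k}]$, pick such a representative of $\beta^{n}$ with $\mathrm{rk}(E_{0}) \geq n$. The vanishing $H^{2j}(\mathbb{S}^{2n};\Z) = 0$ for $0 < j < n$ forces $c_{1}(E_{0}) = \cdots = c_{n-1}(E_{0}) = 0$; Newton's identities then collapse to $\mathrm{ch}_{n}(E_{0}) = (-1)^{n-1} c_{n}(E_{0})/(n-1)!$, and combining with the previous step yields $c_{n}(E_{0}) = \pm (n-1)!\, a$.

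Finally, for an arbitrary complex bundle $E$ over $\mathbb{S}^{2n}$ the reduced class $[E] - [\mathrm{rk}(E)]$ equals $m\beta^{n}$ for a unique $m\in\Z$. Since $c_{n}$ is insensitive to adding trivial summands, $c_{n}(E) = m\cdot c_{n}(E_{0}) = \pm m(n-1)!\, a$, a multiple of $(n-1)!\, a$; conversely every such multiple is realized by a suitable $m$, and the uniqueness of the corresponding stable class is immediate from the bijection $m \leftrightarrow m\beta^{n}$ between $\Z$ and $\tilde{K}_{\C}(\mathbb{S}^{2n})$. The main obstacle is the clean identification of $\mathrm{ch}(\beta)$ and its compatibility under iterated external products --- classical, but requiring Bott periodicity to be carefully unpacked; once that is in hand, the numerical factor $(n-1)!$ emerges automatically from Newton's identities and the vanishing of intermediate cohomology on $\mathbb{S}^{2n}$.
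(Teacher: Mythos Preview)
The paper does not prove this statement; it is quoted verbatim from Husem\"{o}ller (Corollary~9.8, Chapter~20) and used as a black box in the proof of the bounds on $k_0$. There is therefore no proof in the paper to compare against.

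That said, your sketch is the standard and correct argument behind the cited result. Bott periodicity identifies $\tilde{K}_{\C}(\mathbb{S}^{2n})$ with $\Z$, generated by the $n$-fold external power $\beta^{n}$ of the Bott class; multiplicativity of the Chern character gives $\mathrm{ch}(\beta^{n})=a$; and because $H^{2j}(\mathbb{S}^{2n};\Z)=0$ for $0<j<n$, Newton's identities collapse to $\mathrm{ch}_{n}=(-1)^{n-1}c_{n}/(n-1)!$, yielding $c_{n}(E_{0})=\pm(n-1)!\,a$ for any representative $E_{0}$ of the generator. The passage to arbitrary $E$ via $[E]-[\mathrm{rk}\,E]=m\beta^{n}$ and the stability of $c_{n}$ under adding trivial summands is exactly right, and the uniqueness of the stable class follows from $\tilde{K}_{\C}(\mathbb{S}^{2n})\cong\Z$. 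This is essentially the argument one finds in Husem\"{o}ller's text, so your proposal reproduces the proof the paper is citing rather than offering an alternative.
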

Denote by $c_T(E)$ the total Chern class of $E$. Since $H^{*}(\mathbb{S}^{2n},\Z)=H^{0}(\mathbb{S}^{2n},\Z)\oplus H^{2n}(\mathbb{S}^{2n},\Z)$, it follows that $c_T(E)=1+ c_n (t)$, and from the Whitney sum Formula, $c_n (E\oplus F)=c_n (E)+c_n(F)$. It follows that 
\begin{equation}\label{equation chern class}
c_n (lE_{\Delta_{\pm}})=(n - 1)!(\pm l)a
\end{equation}

Now we return to the real setting, so let $E$ be again an arbitrary real vector bundle over the sphere $\mathbb{S}^n$.

\begin{itemize}

\item $n\equiv  0\ (\textrm{mod}\ 8)$. Assume that $E\in \pm l\{ E_{\Delta_+}\}_{\R}=\{ E_{l\Delta_\pm}\}_{\R}$, for some positive integer $l$. Since $\dim{l\Delta_\pm}=2^{n-1}l \geq n+1$, it follows that if $\text{rank}(E\oplus k) \geq 2^{n-1} l$, then $E\oplus k$ is isomoprhic to $E_{l\Delta_+}\oplus k' =E_{l\Delta_+\oplus k'}$, so $k_0 \leq 2^{n-1}l$.

The $(n/2)$-th Chern class of the complexified vector bundle $c(E)$ satisfies
$$c_{n/2}\left(c(E)\right)= c_{n/2}\left( c\left( E_{l\Delta_\pm}\right)\right)=c_{n/2}\left(E_{l\Delta_\pm}\right)= ((n/2) - 1)!(\pm l)a$$
where the first equality follows from the stability of the Chern classes and the last one from (\ref{equation chern class}).

\item $n\equiv 4\ (\textrm{mod}\ 8)$. Assume that $E\in \pm l\{ E_{r(\Delta_+)}\}_{\R}=\{ E_{lr(\Delta_\pm})\}_{\R}$, for some positive integer $l$. Since $\dim{lr(\Delta_\pm )}=2\cdot 2^{n-1}l \geq n+1$, it follows that if $\text{rank}(E\oplus k) \geq 2^{n} l$, then $E\oplus k$ is isomorphic to $E_{lr(\Delta_{\pm})}\oplus k' =E_{lr(\Delta_{\pm})\oplus k'}$, so $k_0 \leq 2^{n}l$.

The $(n/2)$-th Chern class of the complexified vector bundle $c(E)$ satisfies
$$c_{n/2}\left(c(E)\right)= c_{n/2}\left( c\left( E_{lr(\Delta_\pm)}\right)\right)= c_{n/2}\left( c\circ r\left( E_{l\Delta_\pm}\right)\right)$$
Now recall that $c\circ r =1+t$, where $t$ denotes the conjugation of complex vector bundles, so
$$c_{n/2}\left( cr\left( E_{l\Delta_\pm}\right)\right)=c_{n/2}\left(  E_{l\Delta_\pm}\oplus t\left( E_{l\Delta_\pm}\right)\right)=c_{n/2}\left(  E_{l\Delta_\pm}\right)+ c_{n/2}\left( t\left( E_{l\Delta_\pm}\right)\right)$$
The Chern class of the conjugate bundle satisfies (see Proposition 11.1 in \cite{Hu}, Chapter 17): 
$$c_{n/2}\left( t\left( E_{l\Delta_\pm}\right)\right)= (-1)^{n/2}c_{n/2}\left(  E_{l\Delta_\pm}\right)=c_{n/2}\left(  E_{l\Delta_\pm}\right)$$
since $n/2$ is even. So from (\ref{equation chern class}) we get that
$$c_{n/2}\left(c(E)\right)=2c_{n/2}\left(  E_{l\Delta_\pm}\right)=  ((n/2) - 1)!(\pm 2l)a, $$
which together with the inequality $k_0 \leq 2^n l$ above  proves the Theorem.
\end{itemize}

Finally, recall that the $k$-th Pontryagin class $p_k(E)\in H^{4k}(M,\Z)$ of a real vector bundle $E$ over a compact manifold $M$ is defined as:
$$p_k(E)=(-1)^k c_{2k}(c(E))$$
Therefore when $M$ is the sphere $\mathbb{S}^{n}$ of dimension $n\equiv 0\ (\textrm{mod}\ 8)$ (resp. $n\equiv 4\ (\textrm{mod}\ 8)$), we get that $p_{n/4} (E)=c_{n/2}(c(E))$ (resp. $p_{n/4} (E)=-1c_{n/2}(c(E))$). Anyway, in both cases
$$p_{n/4}(E)= ((n/2) - 1)!(\pm l_E) a$$
for some natural number $l_E$, where $a$ is a generator of $H^{n}(\mathbb{S}^{n},\Z)$.


\section{Grassmannian manifolds}\label{section grassmannian}

In this section $\F$ will stand for $\R$, $\C$ or $\Hq$. Let $\U_{\F}(n)$ denote the orthogonal group $\Or(n)$, the unitary group $\U(n)$ or the symplectic group $\Sp(n)\subset \U(2n)$ for $\F=\R$, $\C$ or $\Hq$ respectively. Throughout this section we will consider each of the groups $\U_{\F}(n)$ endowed with its canonical biinvariant metric.

The Grassmannian manifold $\Gr_{\F}(k,n)$ of $k$-dimensional subspaces of $\F^n$ (right subspaces in the case of $\Hq^{n}$) can be viewed as the homogeneous space $\U_{\F}(n)/ (\U_{\F}(k)\times \U_{\F}(n-k))$. This way, $\Gr_{\F}(k,n)$ inherits a quotient metric with nonnegative sectional curvature.

\subsection{Tautological bundle}\label{subsection tautological}
The \emph{tautological vector bundle} $\T_{\F}(k,n)$ over $\Gr_{\F}(k,n)$ is defined as
$$\T_{\F}(k,n)=\left\{ (W,w)\in \Gr_{\F}(k,n)\times\F^n : w\in W\right\}$$
where the bundle projection map is given by $(W,w)\mapsto W$. Define the representation:
$$\begin{array}{ccc}
 \rho_{\F}: \U_{\F}(k)\times \U_{\F}(n-k) & \longrightarrow & \U_{\F}(k)\\   (A,B) & \longmapsto & A 
 \end{array}
 $$
It turns out that $\T_{\F}(k,n)$ is isomorphic to the homogeneous vector bundle $E_{\rho_{\F}}$. The isomorphism is given by:
$$\begin{array}{ccc}
E_{\rho_{\F}} & \longrightarrow & \T_{\F}(k,n) \\ 
\left[M,v\right] & \longmapsto & \left(  M\left(
\begin{array}{c}
\F^k\\
0
\end{array}
\right) , M\left(\begin{array}{c}
v\\
0
\end{array}\right)\right) \end{array}$$
Notice that, although $\T_{\Hq}(k,n)$  is defined as a quaternionic vector bundle, here we are only considering its underlying complex structure. As such, it is isomorphic to the complex vector bundle 
$$E_{\rho_{\Hq}}=(\U_{\F}(n)\times \C^{2k})/ (\U_{\F}(k)\times \U_{\F}(n-k))$$

Observe that $\Gr_{\F}(k,n)$ is diffeomorphic to $\Gr_{\F}(n-k,n)$ under the map $W\mapsto W^{\perp}$, where $\F^n$ is endowed with the Euclidean metric. Clearly, the Whitney sum of $\T_{\F}(n-k,n)$ with $\T_{\F}(k,n)$ is the trivial bundle of rank $n$. From now on we will write just $\T_{\F}$ to denote the bundle $\T_{\F}(k,n)$.


\subsection{Proof of the Main Theorem for projective spaces}

Recall that $\Gr_{\F}(1,n+1)$ is the projective space $\FP^{n}$. In these cases, the quotient metric inherited from $\U_{\F}(n+1)$ is the one giving $\FP^{n}$ the structure of compact rank one symmetric space.

\begin{prop}\label{stablethm proj}
For $\F=\R$, $\C$ and $\Hq$, the following maps are surjective:
$$ \{\alpha\}_{\R}: \Rep_{\R}(\U_{\F}(1)\times \U_{\F}(n)) \rightarrow  S_{\R}(\FP^{n}) $$
$$ \{\alpha\}_{\C}: \Rep_{\C}(\U_{\F}(1)\times \U_{\F}(n)) \rightarrow  S_{\C}(\FP^{n}) $$
\end{prop}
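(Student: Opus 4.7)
By Lemma \ref{lem relates images beta}(2), it suffices to show that the ring homomorphism $\alpha_\F\colon \Rr_\F(H)\to K_\F(\FP^n)$ is surjective for $\F\in\{\R,\C\}$, where $H=\U_\F(1)\times\U_\F(n)$. I would organize the argument by the pair (base space, coefficient field of the bundles). The complex case over $\CP^n$ and $\HP^n$ is an immediate application of Pittie's Theorem \ref{theorem free G surjection}: the groups $\U(n+1)$ and $\Sp(n+1)$ are compact and connected with torsion-free fundamental group ($\Z$ and $0$ respectively), and the isotropies $\U(1)\times\U(n)$ and $\Sp(1)\times\Sp(n)$ are closed, connected and of maximal rank.

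For the complex case over $\RP^n$, Pittie's hypotheses fail because $\Or(n+1)$ is disconnected, so I would argue directly. Use the classical computation $\tilde K_\C(\RP^n)\cong \Z/2^{\lfloor n/2\rfloor}$, generated by the stable class of $c(\T_\R)-1$ where $\T_\R$ is the tautological real line bundle. Since $\T_\R$ is the homogeneous bundle associated to the one-dimensional real representation of $\Or(1)\times\Or(n)$ that projects onto the first factor, its complexification $c(\T_\R)$ is again homogeneous and lies in the image of $\alpha_\C$; together with the trivial representation accounting for the rank summand $\Z$, this yields surjectivity.

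For the real case over each of the three projective spaces, I would invoke the classical computations of $\tilde K_\R(\FP^n)$ due to Adams, Sanderson, and Fujii. In each case the additive generators can be exhibited as stable classes of bundles obtained from the tautological bundle $\T_\F$ by iterated real restriction, complexification and tensor product. Since these operations correspond to the analogous operations on $\Rep(H)$ (complexification and real restriction of representations, and tensor product of representations), they preserve homogeneity. Thus the generators lie in the image of $\alpha_\R$ and the map is surjective.

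The main obstacle is the real case: no direct analogue of Pittie's theorem is available, so one must inspect $\RP^n$, $\CP^n$ and $\HP^n$ individually and verify that a chosen additive generating set of $\tilde K_\R(\FP^n)$ consists of homogeneous bundles coming from representations of $\U_\F(1)\times\U_\F(n)$. The disconnectedness of $\Or(n+1)$ introduces an analogous but milder difficulty on the complex side over $\RP^n$, which is resolved by the fact that $\tilde K_\C(\RP^n)$ happens to be cyclic with an obvious homogeneous generator.
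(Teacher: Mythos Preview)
Your proposal is correct and lands on the same key observation as the paper: the reduced $K$-rings of the projective spaces are generated by the tautological bundle (or its complexification/real restriction), and since $\T_{\F}$ is homogeneous, surjectivity of $\alpha_{\F}$ follows, whence that of $\{\alpha\}_{\F}$ by Lemma~\ref{lem relates images beta}.

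The only real difference is organizational. You split off the complex case over $\CP^n$ and $\HP^n$ and dispatch it via Pittie's Theorem~\ref{theorem free G surjection}, then handle $\RP^n$ and all three real cases by inspecting explicit generators. The paper instead treats all six cases uniformly: it simply records that each ring $\tilde K_{\F}(\FP^n)$ is generated by a single element $\T_{\R}-1$, $c(\T_{\R})-1$, $r(\T_{\C})-2$, $\T_{\C}-1$, $r(\T_{\Hq})-4$, or $\T_{\Hq}-2$ (citing \cite{Ad}, \cite{San}), and since $\alpha_{\F}$ is a ring homomorphism hitting this generator, it is surjective. Your route buys you a clean black-box argument for two of the six cases at the cost of a case split; the paper's route is more uniform and slightly shorter. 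Note also that because $\alpha_{\F}$ is a ring map, you only need to hit the single \emph{ring} generator in each case---there is no need to talk about additive generators or iterated operations.
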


\begin{proof}

The real and complex $K$-theory of projective spaces are well known, see for example \cite{Ad} and \cite{San}. The rings $\tilde{K}_{\R}(\FP^{n})$ and $\tilde{K}_{\C}(\FP^{n})$ are respectively generated by the following elements:
$$
\begin{array}{rcr}
\T_{\R}-1_{\R}\in\tilde{K}_{\R}(\RP^{n}),   &  \quad   &    c(\T_{\R})-1_{\C}\in\tilde{K}_{\C}(\RP^{n}),   \\
r(\T_{\C})-2_{\R}\in\tilde{K}_{\R}(\CP^{n}), &  \quad  &    \T_{\C}-1_{\C}\in\tilde{K}_{\C}(\CP^{n}),     \\
r(\T_{\Hq})-4_{\R}\in\tilde{K}_{\R}(\HP^{n}), & \quad  &    \T_{\Hq}-2_{\C}\in\tilde{K}_{\C}(\HP^{n}).
\end{array}
$$
Since the tautological bundle $\T_{\F}$ is homogeneous, the map
$$\alpha_{\R}: \tilde{\Rr}_{\R}(\U_{\F}(1)\times \U_{\F}(n))\rightarrow \tilde{K}_{\R}(\FP^{n})$$
is surjective, and by Lemma \ref{lem relates images beta},
$$ \{\alpha\}_{\R}: \Rep_{\R}(\U_{\F}(1)\times \U_{\F}(n)) \rightarrow  S_{\R}(\FP^{n})$$
is also surjective. The same arguments work for the map $\alpha_{\C}$.

\end{proof}

Proposition \ref{stablethm proj} proves that there is a homogeneous vector bundle in every stable class of real and complex vector bundles over each projective space. Now apply Lemma \ref{lemma nonnegative curvature} to get the Main Theorem for projective spaces.

\section{The Cayley plane}\label{section cayley plane}

In this section we consider the Cayley plane $\Ca$. Recall that the Cayley plane is a $16$-dimensional $CW$-complex consisting of three cells of dimensions $0$, $8$ and $16$. As a homogeneous space, it can be viewed as the quotient of the $52$-dimensional exceptional Lie group $F_4$ under the action of the spin group $\Spin(9)$. Let us endow $F_4$ with its canonical biinvariant metric, so that $\Ca$ with the quotient metric is a compact rank one symmetric space. 


\subsection{Representation rings $\Rr_{\F}(F_4)$ and $\Rr_{\F}(\Spin(9))$}

The representation rings of $F_4$ are known (see \cite{AdMM}, \cite{Mi2} and \cite{Yo}). Denote by $\lambda^k$ the $k$-th exterior product of the irreducible $26$-dimensional representation $\lambda$ given in Corollary 8.1 in \cite{AdMM}, and by $\kappa$ the adjoint action of $F_4$ on its Lie algebra $\mathfrak{f}_4$. It turns out that the representations $\lambda^k$ and $\kappa$ are real. We denote their complexifications in the same way. The real and complex representation rings of $F_4$ are the polynomial ring 
$$\Rr_{\F}(F_4)=\Z [\lambda^1,\lambda^{2},\lambda^{3},\kappa],$$
where $\F$ stands for $\R$ or $\C$, and the complexification map 
$$c: \Rr_{\R}(F_4)\rightarrow \Rr_{\C}(F_4)$$
is an isomorphism. 

The representation rings of $\Spin(9)$ have been described in Section \ref{subsection representation rings of spin}. Observe that the complexification map
$$c: \Rr_{\R}(\Spin(9))\rightarrow \Rr_{\C}(\Spin(9))$$
is surjective.







\medskip

\subsection{The $K_{\F}$-theory of $\Ca$}\label{subsection k-theory cayley}

The cohomology of $\Ca$ is well known, in particular we have:
$$
H^k(\Ca ,\Z)=\begin{cases} 
\Z  & \text{ if } k=0,8,16\\
0   &  \text{ otherwise }
\end{cases} 
$$
Hence $H^*(\Ca ,\Z)$ is torsion-free and Theorem \ref{thm atiyah hirze even dimen betti numbers} gives us the following:
$$K_{\C}(\Ca)=\Z\oplus \Z\oplus  \Z.$$ 
The real $K$-theory of $\Ca$ follows from Lemma 2.5  in \cite{Ho}, which states that if $M$ is a finite $CW$-complex with cells only in dimensions $ 0\ (\textrm{mod}\ 4)$ then 
$$K_{\R}(M)=\underbrace{\Z\oplus \dots\oplus\Z}_{n\ \text{times}}, $$
where $n$ is the number of cells in $M$. In particular, we have

\begin{prop}\label{prop real k-theory of cayley}
$K_{\R}(\Ca)=\Z\oplus \Z\oplus  \Z$.
\end{prop}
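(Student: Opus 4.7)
The plan is to apply the cited Lemma 2.5 of \cite{Ho} directly, in complete parallel to how Theorem \ref{thm atiyah hirze even dimen betti numbers} was just used to compute $K_{\C}(\Ca)$. The essential input is the CW structure of the Cayley plane, which is recalled at the very start of Section \ref{section cayley plane}: $\Ca$ is a $16$-dimensional CW-complex consisting of three cells, of dimensions $0$, $8$, and $16$.

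Since each of the dimensions $0$, $8$, $16$ is divisible by $4$, the hypotheses of the lemma are satisfied, and its conclusion is that $K_{\R}(\Ca)$ is free abelian of rank equal to the number of cells, namely $3$. This yields $K_{\R}(\Ca) = \Z \oplus \Z \oplus \Z$, and the proof is complete in a single line once the CW structure is invoked.

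I do not foresee any real obstacle here; the result is essentially a citation. The only thing to verify is the cell structure of $\Ca$, but this is a standard fact about the octonionic projective plane $F_4/\Spin(9)$ and is stated at the start of the section. Note that one should \emph{not} expect to derive the rank from Theorem \ref{thm atiyah hirze even dimen betti numbers} as in the complex case, since that theorem only governs complex $K$-theory; Hovey's lemma is precisely the real analogue tailored to CW-complexes whose cell dimensions are divisible by $4$, which $\Ca$ satisfies.
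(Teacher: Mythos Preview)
Your proposal is correct and is exactly the paper's argument: the proposition is deduced in one line from Lemma~2.5 of \cite{Ho} applied to the standard CW structure on $\Ca$ with cells in dimensions $0$, $8$, $16$. One small slip: the reference \cite{Ho} is Hoggar, not Hovey.
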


Now consider the induced map $r\circ c:K_{\R}(\Ca)\rightarrow K_{\R}(\Ca)$. By Proposition \ref{prop real k-theory of cayley} we know that $K_{\R}(\Ca)$ is torsion-free, and since the map $r\circ c$ is nothing but multiplication by $2$, it must be injective.

\begin{lem}\label{lem K-theory isomor Ca}
The induced map $r\circ c:K_{\R}(\Ca)\rightarrow K_{\R}(\Ca)$ is injective. In particular, $c:K_{\R}(\Ca)\rightarrow K_{\C}(\Ca)$ is also injective.
\end{lem}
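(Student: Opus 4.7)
The plan is to use the standard identity $r\circ c = 2\cdot\Id$ on any real $K$-theory group, together with the torsion-freeness of $K_{\R}(\Ca)$ established in Proposition \ref{prop real k-theory of cayley}.

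First I would recall why $r \circ c$ is multiplication by $2$ on $K_{\R}(\Ca)$: for any real vector bundle $E$, the real restriction $r(c(E))$ of its complexification is canonically isomorphic to $E \oplus E$, since a complex $n$-bundle has underlying real rank $2n$ and the canonical $J$-action splits off a copy of $E$. Passing to virtual bundles, this identity extends to all of $K_{\R}(\Ca)$, so $(r \circ c)(x) = 2x$ for every $x \in K_{\R}(\Ca)$.

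Next, by Proposition \ref{prop real k-theory of cayley}, $K_{\R}(\Ca) \cong \Z \oplus \Z \oplus \Z$ is a free abelian group. In particular it has no $2$-torsion, so multiplication by $2$ is injective. This gives the injectivity of $r \circ c$.

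For the second assertion, I would simply chase: if $x \in K_{\R}(\Ca)$ satisfies $c(x) = 0$ in $K_{\C}(\Ca)$, then $r(c(x)) = 0$ in $K_{\R}(\Ca)$, so $(r \circ c)(x) = 0$, whence $x = 0$ by the injectivity just proved. Thus $c : K_{\R}(\Ca) \to K_{\C}(\Ca)$ is injective. There is no real obstacle here; the entire content of the lemma is the combination of torsion-freeness of $K_{\R}(\Ca)$ (the nontrivial input, supplied by the cell structure of $\Ca$ and Hoggar's result cited above) with the elementary identity $r \circ c = 2\cdot \Id$.
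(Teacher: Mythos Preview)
Your proof is correct and follows essentially the same approach as the paper: the paper also argues that $r\circ c$ is multiplication by $2$ and that $K_{\R}(\Ca)$ is torsion-free by Proposition~\ref{prop real k-theory of cayley}, concluding injectivity. You simply supply a bit more detail (the reason $r\circ c=2\cdot\Id$ and the explicit chase for the injectivity of $c$), but the argument is the same.
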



\subsection{Proof of the Main Theorem for $\Ca$}

First we construct homogeneous bundles in every stable class.

\begin{prop}\label{propo stable cay}
The map
$$ \{\alpha\}_{\F}: \Rep_{\F}(\Spin(9)) \rightarrow  S_{\F}(\Ca) $$
is surjective for $\F=\R$ and $\C$. 
\end{prop}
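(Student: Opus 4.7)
The plan is to reduce everything to Pittie's Theorem (Theorem \ref{theorem free G surjection}) for the complex case, and then leverage the special behaviour of complexification on both sides to transfer the result to the real case.

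\textbf{Complex case.} First I would verify that the Cayley plane $\Ca = F_4/\Spin(9)$ fits the hypotheses of Theorem \ref{theorem free G surjection}. The group $F_4$ is compact, connected and simply connected, so $\pi_1(F_4)=0$ is torsion-free. The subgroup $\Spin(9)$ is closed and connected, and both $F_4$ and $\Spin(9)$ have rank $4$, so $\Spin(9)$ is a closed connected subgroup of maximal rank. Pittie's theorem then yields that
$$\alpha_{\C}: \Rr_{\C}(\Spin(9)) \rightarrow K_{\C}(\Ca)$$
is surjective. Combining this with Lemma \ref{lem relates images beta} immediately gives that $\{\alpha\}_{\C}: \Rep_{\C}(\Spin(9)) \rightarrow S_{\C}(\Ca)$ is surjective.

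\textbf{Real case.} Here I would exploit the commutative square
$$\xymatrix{ \Rr_{\R}(\Spin(9))\ar[d]^{c} \ar[r]^{\alpha_{\R}} & K_{\R}(\Ca)\ar[d]^{c}\\
 \Rr_{\C}(\Spin(9))\ar[r]^{\alpha_{\C}}  & K_{\C}(\Ca)}$$
from Section \ref{section stable classes repres}. As was noted in Section \ref{section cayley plane}, the left vertical map $c:\Rr_{\R}(\Spin(9))\to\Rr_{\C}(\Spin(9))$ is surjective because all the generators $\Lambda^k$ and (when applicable) the spin representation of $\Spin(9)$ are of real type; by the complex case, $\alpha_{\C}$ is surjective as well. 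Therefore the composition $c\circ\alpha_{\R}=\alpha_{\C}\circ c$ is surjective. Now given $x\in K_{\R}(\Ca)$, choose $y\in\Rr_{\R}(\Spin(9))$ with $c(\alpha_{\R}(y))=c(x)$; by Lemma \ref{lem K-theory isomor Ca}, $c:K_{\R}(\Ca)\to K_{\C}(\Ca)$ is injective, forcing $\alpha_{\R}(y)=x$. This proves $\alpha_{\R}:\Rr_{\R}(\Spin(9))\to K_{\R}(\Ca)$ is surjective, and a second application of Lemma \ref{lem relates images beta} upgrades this to surjectivity of $\{\alpha\}_{\R}$.

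\textbf{Expected difficulty.} The only real ingredient beyond a direct invocation of Pittie's theorem is the diagram chase in the real case; this is clean once one has the injectivity of complexification on $K_{\R}(\Ca)$ (established in Lemma \ref{lem K-theory isomor Ca} from the fact that $r\circ c$ is multiplication by $2$ on the torsion-free group $K_{\R}(\Ca)$). The main structural input, and the step doing almost all the work, is Pittie's theorem, so the main obstacle was really the verification of its hypotheses for $F_4/\Spin(9)$ and the preparatory computation showing $c$ is surjective on $\Rr_{\R}(\Spin(9))$ and injective on $K_{\R}(\Ca)$; both are already in place by the time we reach this proposition.
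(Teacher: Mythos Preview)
Your proof is correct and follows essentially the same approach as the paper. The paper also handles the complex case by observing that $\Ca=F_4/\Spin(9)$ falls under Theorem \ref{thm complex bundles over many spaces} (i.e., Pittie's theorem plus Lemma \ref{lem relates images beta}), and for the real case carries out exactly the same diagram chase you describe: surjectivity of $c:\Rr_{\R}(\Spin(9))\to\Rr_{\C}(\Spin(9))$ combined with surjectivity of $\alpha_{\C}$ and injectivity of $c:K_{\R}(\Ca)\to K_{\C}(\Ca)$ from Lemma \ref{lem K-theory isomor Ca}. The only cosmetic difference is that the paper phrases the chase element-wise in $\tilde{K}_{\R}(\Ca)$ (writing $c(E-\mathrm{rank}_{\R}E)=E_{\rho}-\dim\rho$ and then pulling $\rho$ back to a real $\rho'$) rather than as an abstract surjectivity statement for $\alpha_{\R}$ followed by Lemma \ref{lem relates images beta}; the content is identical.
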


\begin{proof}

The surjectivity of $\{\alpha\}_{\C}$ is included in Theorem \ref{thm complex bundles over many spaces} since $F_4$ is simply connected  and contains $\Spin(9)$ as a subgroup of maximal rank.


For the real case let $E$ be an arbitrary real vector bundle over $\Ca$. By the discussion above we have that
\begin{equation}\label{equation complex 1}
c(E-\text{rank}_{\R}{E} )=E_{\rho}-\dim{\rho}
\end{equation}
for some $\rho\in\Rep_{\C}(\Spin(9))$. On the other hand
$$c: \Rr_{\R}(\Spin(9))\rightarrow \Rr_{\C}(\Spin(9))$$
is surjective, so there exists $\rho'\in\Rep_{\R}(\Spin(9))$ such that $c(\rho')=\rho$, so
\begin{equation}\label{equation complex 2}
c(E_{\rho'}-\dim{\rho'})=E_{\rho}-\dim{\rho}
\end{equation}
By Lemma \ref{lem K-theory isomor Ca}, the complexification map $c:K_{\R}(\Ca)\rightarrow K_{\C}(\Ca)$ is injective, so from (\ref{equation complex 1}) and (\ref{equation complex 2}) it follows that
$$E_{\rho'}-\dim{\rho'}= E-\text{rank}_{\R}{E}$$
in $K_{\R}(\Ca)$ and hence $\{ E\}_{\R}= \{ E_{\rho'}\}_{\R}$.

\end{proof}

Finally, the proof of the Main Theorem for the Cayley plane is a direct consequence of Proposition  \ref{propo stable cay} together with Lemma \ref{lemma nonnegative curvature}.


\end{document}